\newcommand{\bs}[1]{\boldsymbol{#1}}
\newcommand{\Th}[0]{\mathcal{T}}
\newcommand{\Sh}[0]{\mathcal{S}}
\newcommand{\Zh}[0]{\mathcal{Z}}
\newcommand{\R}[0]{\mathbb{R}}
\newcommand{\N}[0]{\mathbb{N}}
\let\mat=\spalignmat
\let\vec=\spalignvector
\newcommand{\red}[1]{\textcolor{black}{#1}}
\newcommand{\vertiii}[1]{{\left\vert\kern-0.25ex\left\vert\kern-0.25ex\left\vert #1 \right\vert\kern-0.25ex\right\vert\kern-0.25ex\right\vert}}
\newcommand{\norm}[2][]{\| #2 \|_{#1}}
\DeclarePairedDelimiterX\Set[1]{\lbrace}{\rbrace}%
 {  #1 }
\newcommand{\expnumber}[2]{{#1}\mathrm{e}{#2}}
\newcommand{\renewtheorem}[1]{%
  \expandafter\let\csname #1\endcsname\relax
  \expandafter\let\csname c@#1\endcsname\relax
  \expandafter\let\csname end#1\endcsname\relax
  \newtheorem{#1}%
}
\theoremstyle{plain}
\theoremstyle{definition}
\newenvironment{theorem}{%
  \begin{thrm}%
}{%
  \end{thrm}%
}
\newenvironment{problem}{%
  \begin{prblm}%
}{%
  \end{prblm}%
}
\newenvironment{definition}{%
  \begin{dfntn}%
}{%
  \end{dfntn}%
}
\newenvironment{lemma}{%
  \begin{lmm}%
}{%
  \end{lmm}%
}
\newenvironment{assumption}{%
  \begin{crtrn}%
}{%
  \end{crtrn}%
}
\newtheorem{remark}{Remark}
\numberwithin{equation}{section}
\journal{Computers \& Mathematics with Applications}
\begin{document}

\begin{frontmatter}

\title{Distributed finite element solution using model order reduction\tnotemark[1]}

\tnotetext[1]{This work was supported by the Academy of Finland (Decision 353080)}

\author[label1]{Tom Gustafsson} 
			\ead{tom.gustafsson@aalto.fi}
\affiliation[label1]{organization={Department of Mechanical Engineering, Aalto University},
            addressline={P.O. Box 11100 00076 Aalto}, 
            city={Espoo},
            country={Finland},
			}

\author[label2]{Antti Hannukainen} 
			\ead{antti.hannukainen@aalto.fi}
\affiliation[label2]{organization={Department of Mathematics and Systems Analysis, Aalto University}
            }

\author[label2]{Vili Kohonen} 
			\ead{vili.kohonen@aalto.fi}

\begin{abstract}
We extend a localized model order reduction method
for the distributed finite element solution of elliptic boundary value problems in the cloud.
We give a computationally efficient technique to compute the required inner product matrices and optimal reduced bases.
A memory-efficient methodology is proposed to project the global finite element linear system onto the reduced basis. 
Our
numerical results demonstrate the technique
using \red{nontrivial} tetrahedral meshes \red{and subdomain interfaces}
with up to 85 million degrees-of-freedom
on a laptop computer
by distributing 
the bulk of the model order reduction to the cloud.
\end{abstract}


\begin{keyword}
	finite element method, partial differential equations, model order reduction, distributed computing, cloud computing
	\MSC[2020] 65F55, 
	65N30, 
	65N55,  
	65Y05 

\end{keyword}

\end{frontmatter}

\section{Introduction}

High performance computing (HPC)
is often done using supercomputers
that have fast internal networking.
There is a focus on networking because the bottleneck
in many HPC workloads
can be traced back to message passing and sharing
of data between the processes.
Consequently, many distributed workloads
benefit from
specialized networking technologies or topologies
that improve the node-to-node bandwidth
and latency; see, e.g., \cite{zhou_hardware_2007}.
Cloud computing, on the other hand,
is a
paradigm for resource allocation
in data centers
which relies on the virtualization technology.
Commercial cloud capacity is ubiquitous,
and the use of excess capacity can be cost-effective
as many providers support a spot pricing model~\cite{munhoz_faulttolerant_2022}.
However,
cloud services have different priorities than supercomputers
when it comes to energy efficiency, security and networking.
As a consequence,
cloud computing
may be more suitable for tasks
with lower requirements for interprocess communication~\cite{coghlan_magellan_2011}.

This study focuses on a distributed finite element solution
technique for elliptic problems, proposed in \cite{babuska_optimal_2011}, which
eliminates the need for
message passing during the solution
of the linear system.
The domain is decomposed into several overlapping subdomains as a preprocessing step
and a local reduced basis is computed
for each subdomain in parallel using randomized model order reduction
techniques~\cite{buhr_randomized_2018}.
The basis reduction step is
an embarrasingly
parallel batch job that is suitable for
any cluster of networked computing nodes,
including virtual
cloud instances.
Because of no interprocess communication
during the reduction, the bulk of the computation can be done out-of-order
or on cheaper spot instances
that may be interrupted at a moment's notice.
This is in contrast to many of the traditional
iterative solvers based on
domain decomposition; see, e.g.,~\cite{wohlmuth_discretization_2001, toselli_domain_2005, dolean_introduction_2015}.
From a practical perspective,
the technique can be thought of as a more accurate alternative to
the classical component mode synthesis techniques~\cite{roy_craig_coupling_nodate}
that are based on non-overlapping subdomains.

After constructing the local bases in parallel, we are then able to
greatly decrease the size of the finite element system matrix 
and improve its conditioning by projecting 
it onto the reduced basis.
A majority of the computational work
can be distributed and only
one layer of interface elements must
be assembled by the master node
where an interpolation technique,
a slight variation of the original partition
of unity method~\cite{babuska_partition_1997},
is used to match the bases originating from the different subdomains.
The global error introduced
by the reduction can be
successfully controlled in the $H^1$ norm
through a parameter which specifies
the maximum tolerance for the local relative
error in each subdomain.
The final reduced linear system can be solved
using traditional techniques such
as the conjugate gradient method on the
master node or, as in our case, on a laptop computer.

The basis reduction is performed using a model order reduction technique that has
been first introduced in the context of multiscale modelling 
\cite{babuska_optimal_2011, babuska_machine_2014,
babuska_multiscale-spectral_2020}. The problem domain is partitioned and the
subdomains extended to form an overlap -- the setup is reminiscent of
the classical alternating Schwarz domain decomposition method
\cite{wohlmuth_discretization_2001}. In multiscale modelling, the extended
subdomains are referred to as oversampling
domains~\cite{efendiev_generalized_2014} and an optimal reduced basis can be
constructed by finding and restricting the spectrum of a harmonic extension
from the boundary of each extended subdomain. We note that the harmonic extension is close
to some of the lifting operators used in the iterative domain decomposition
literature; cf.~\cite{toselli_domain_2005}. In the context
of multiscale modelling, we track the utilization of the
oversampling domain trace space back to \cite{efendiev_generalized_2013,
efendiev_generalized_2014}. However, our interest is purely the scaling of the methodology for large-scale computing.

In this study, we solve the elliptic source problem
in a cloud computing environment
and further optimize the computation
through the use of randomized linear algebra~\cite{halko_finding_2011,
martinsson_randomized_2020}, which was applied to model order
reduction in \cite{calo_randomized_2016} and later in
\cite{buhr_randomized_2018, schleus_optimal_2022}.
A technique similar to ours
was used for the distributed solution of the Laplace eigenvalue problem in
\cite{hannukainen_distributed_2022}. 
The reduced bases from the local approximation spaces have been shown to 
be optimal in the sense of Kolmogorov $n$-width \cite{babuska_optimal_2011}. 
Precisely, the local approximation error converges exponentially with 
respect to the dimension of the local approximation space. Slightly tighter 
error bounds were derived in \cite{ma_novel_2022} for cuboidal domains, where the
rate of converge was also studied with respect to the  sizes of the subdomain
and its extension. Intuitively speaking, the rate increases with larger subdomain
extensions.

Further, \cite{ma_error_2022} analyzed the discrete problem,
showing similar exponential convergence rates in the finite-dimensional setting
and convergence to the continuous problem as $h\to 0$.
Our work extends the theoretical discrete results. Some eigenvalue problem
bounds were derived in terms of $\ell_2$ norms in
\cite{calo_randomized_2016} whereas here we generate the local reduced bases from
low-rank matrix approximations in the weighted $\ell_2$ matrix norm. The approach is similar to
\cite{buhr_randomized_2018, schleus_optimal_2022} with the difference that we show how to construct the required inner product matrices for properly weighted local norms, and compute the local approximation bases using them
so that the matrix norms and the continuous $H^1$ norms coincide.
Convergence to the traditional finite element solution is immediate and the
error or, alternatively, the size of the reduced basis, can be controlled easily using
a single parameter. 

While the existing literature on local approximation spaces often mentions the embarrassingly
parallel nature of the basis creation, \red{only
	\cite{hannukainen_distributed_2022}, focusing
	on the Laplace eigenvalue problem, has previously included larger numerical
examples with unstructured meshes and up to 10 million degrees-of-freedom on a cluster of workstations.}
For very large problems, there are three main challenges that complicate or even prohibit scaling. 
First, naively computing the local inner product matrices required for constructing the reduced bases in the proper norm is expensive. We show how to do this efficiently without forming the local Schur complement. 
Second, the local reduced bases need to be as small as possible to create a compact global transformation. Computing the basis with an explicit SVD is costly and using plain sketching from randomized linear algebra \cite{martinsson_randomized_2020}, while fast, introduces lots of redundant dimensions to the basis. We outline a variant of randomized SVD that produces almost optimal local bases with very high probability while being practically as fast as the sketching-only routine. 
Third, the transformation of the original system to the reduced basis requires prohibitive amounts of memory if done naively. We propose a memory-efficient methodology to do the projection such that laptop memory suffices. With these optimizations, the method can be scaled to very large problems in three dimensions.

We focus on the Poisson source problem and compute three-dimensional numerical examples with up to 85
million degrees-of-freedom on a laptop computer by distributing the bulk of the model order reduction to the cloud. The method enables large-scale computing in a publicly
available massively parallel computing environment.  Unlike most studies
that concentrate on structured and regular grid-like mesh partitions and
extensions, we experiment with arbitrary partitions obtained via
graph partitioning tools.
Contrary to the remarks of other authors \cite{calo_randomized_2016, buhr_randomized_2018, schleus_optimal_2022}, the scaling of the local problems with
respect to the number of degrees-of-freedom on the boundary of the
extended subdomain is not an issue in the
distributed setting even when explicitly creating the lifting operator and
finding its singular values. For extended subdomains with
\numprint{10000}--\numprint{20000} nodes, the local problems take around a
minute to solve for our explicit solver. Randomized
numerical linear algebra is used to cut down the solution times to seconds
for the same local problems, although in principle to a lessened degree
of error control. \red{In practice, the combination of the slack local error estimate and the tight probabilistic
bounds given, e.g., in~\cite{halko_finding_2011} causes us to observe pratically identical results with the explicit and the randomized variant.} 

The rest of the paper is organized as follows. In \cref{sec:modelproblem}, we introduce the elliptic model problem and its (approximate) discrete formulation. Next, we give a high-level presentation of the domain decomposition method in \cref{sec:dd}. \Cref{sec:lowrank} follows with an analysis how to approximate the harmonic extension operator in proper norms. In \cref{sec:analysis}, we derive a global error estimate for the reduced basis approximation. Then, we discuss implementational details in \cref{sec:implementation}. \Cref{sec:numtests} closes off with numerical error and scaling tests.

\section{Model problem}
\label{sec:modelproblem}

Let $\Omega \subset \R^d$, $d \in \{2,3\}$,
be a polygonal/polyhedral domain and $f \in L^2(\Omega)$.
We consider the boundary value problem: find $\phi$ such that
\begin{equation}
	\begin{aligned}
		\label{modelproblem}
		-\Delta \phi &= f \quad && \text{in $\Omega$},\\
		\phi &= 0 \quad && \text{on $\partial \Omega$}.
	\end{aligned}
\end{equation}
The finite element method is based on the weak formulation of the above problem. We analyze \eqref{modelproblem} to retain our focus in developing an efficient numerical method.
More general problems in terms of parameterization and boundary conditions have been studied in \cite{babuska_optimal_2011, babuska_machine_2014, babuska_multiscale-spectral_2020}.

\begin{problem}[Continuous formulation]
\label{prob:weak}
    Find $\phi \in H_0^1(\Omega)$ such that
\begin{equation}
\label{eq:weak}
\int_\Omega \nabla \phi \cdot \nabla v \, dx = \int_\Omega f v \, dx\quad \forall v \in H^1_0(\Omega),
\end{equation}
where $f \in L^2(\Omega)$.
\end{problem}

We introduce a shape-regular finite element triangulation/tetrahedralization $\mathcal{T}$, of maximum diameter $h$, and the corresponding
space of continuous, piecewise linear
finite elements $V \subset H^1_0(\Omega)$.
The discrete weak formulation corresponding to
\eqref{eq:weak} is obtained by replacing $H^1_0(\Omega)$ with its finite-dimensional counterpart.
\begin{problem}[Discrete formulation]
\label{prob:dweak}
Find $u \in V$
such that
\begin{equation}
\label{eq:dweak}
\int_\Omega \nabla u \cdot \nabla v \, dx = \int_\Omega f v \, dx\quad \forall v \in V.
\end{equation}
\end{problem}
The number of elements in $\mathcal{T}$ is assumed to be large
enough for the direct solution of
\eqref{eq:dweak} to be unfeasible.
In the next section, we construct a
reduced space $\widetilde{V} \subset V$ so that $\dim \widetilde{V} \ll \dim V$.

\begin{problem}[Reduced discrete formulation]
\label{prob:reduced}
Find $\widetilde{u} \in \widetilde{V}$
such that
\begin{equation}
\label{eq:rdweak}
\int_\Omega \nabla \widetilde{u} \cdot \nabla v \, dx = \int_\Omega f v \, dx\quad \forall v \in \widetilde{V}.
\end{equation}
\end{problem}

In the rest of the paper, we use the following standard notation
for the $L^2$ and $H^1$ norms
over $S \subset \Omega$, respectively:
\[
\| w \|^2_{0,S} = \int_S w^2 \, dx, \quad \| w \|^2_{1,S} = \| w \|_{0,S}^2 + \| \nabla w \|_{0,S}^2.
\]

\section{Domain decomposition}

\label{sec:dd}

The domain $\Omega$
is split into $n$ overlapping subdomains $\Omega = \omega_1 \cup \dots \cup \omega_n$
so that the boundaries $\partial \omega_i$
do not cut through any elements in $\mathcal{T}$ and so
that the intersection of any two subdomains is either empty or a single layer of
elements with thickness $\propto h$, see \cref{fig:decomposition}. 
This decomposition naturally introduces $n$ local meshes
$\mathcal{T}_i$ and finite element spaces $V_i$, $i=1,\dots,n$, where
\[
V_i = \{ w \in H^1(\omega_i) : w|_{\partial \Omega} = 0,\,w|_T \in P_1(T)\,\forall T \in \Th_i\}.
\]

Each local mesh is then extended by adding
all the elements of $\mathcal{T}$ which have any points below distance $r > 0$, i.e.
\[
\Th_i^+ = \{ T \in \Th : \inf_{x \in T, y \in \omega_i} |x - y| < r \}, 
\]
where $|\cdot|$ denotes the Euclidean norm.
The corresponding extended subdomain
is denoted by $\omega_i^+$.
Consequently, the boundary $\partial \omega_i^+$
will not cut through any elements and,
therefore,
also the extended subdomains naturally define
$n$ finite element spaces $V_i^+$, $i=1,\dots,n$, where
\[
V_i^+ = \{ w \in H^1(\omega_i^+) : w|_{\partial \Omega} = 0,\,w|_T \in P_1(T)\,\forall T \in \Th_i^+\}.
\]
As demonstrated later,
the choice of the extension size $r$ is a trade-off
between the size of the reduced linear
system and the size of the local
problems that are solved during
the basis reduction process.

\begin{figure}[h!]
    \centering
    \includegraphics[width=0.6\textwidth]{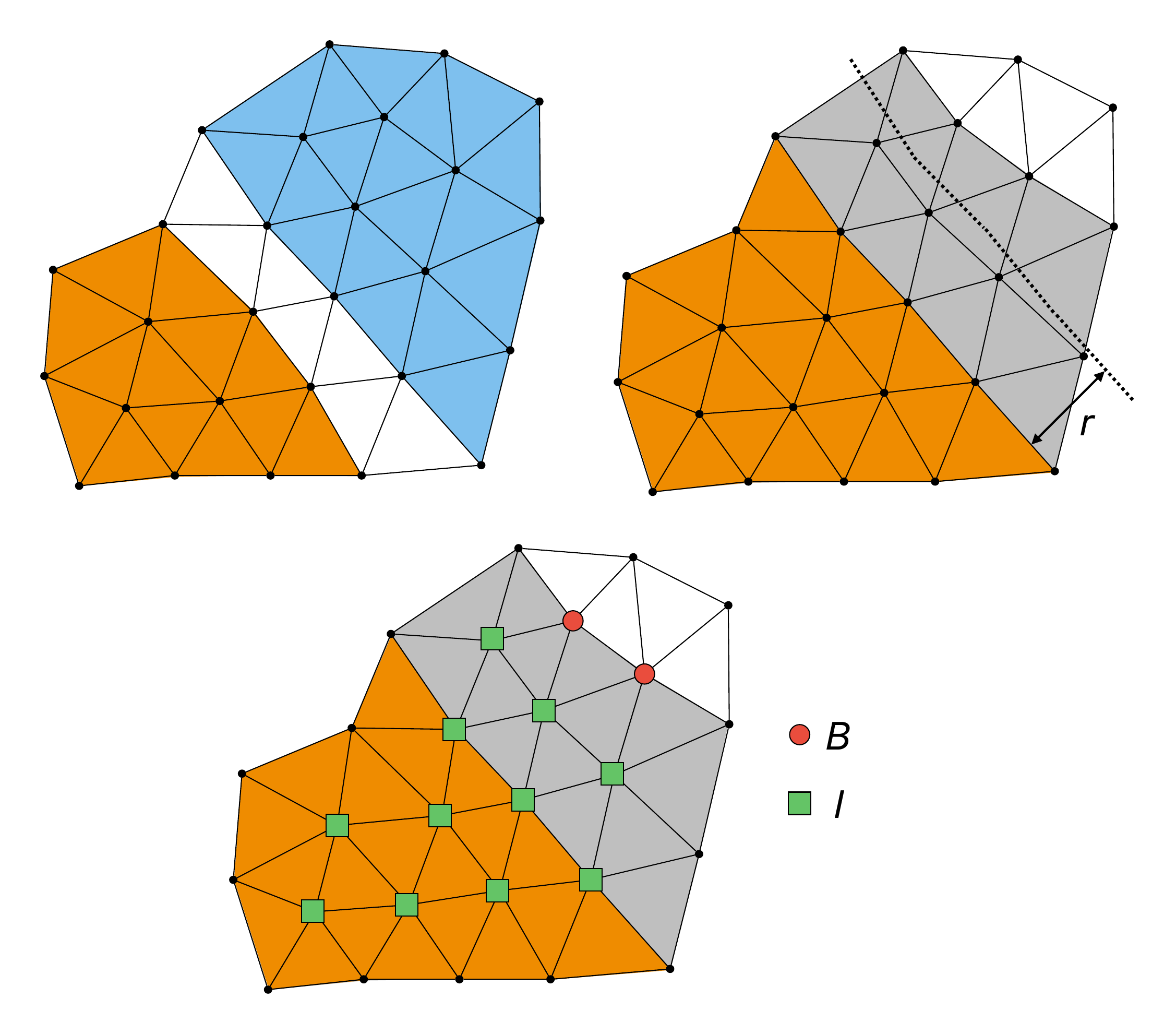}
	\caption{An example with two subdomains. (Top left.) A polygonal domain is split into overlapping subdomains
		$\omega_1$ (orange and white) and $\omega_2$ (blue and white).  The subdomain $\omega_0$
		(white), consists of elements that belong to \textit{both} subdomains -- in general, to more than one subdomain.
		Such a decomposition can be obtained by feeding the mesh edge
		connectivity to a graph partitioning software, e.g., METIS~\cite{karypis_metis_1997} or Scotch~\cite{pellegrini_scotch_1996},
		and finding elements that have at least one node belonging to a
		specific subgraph. (Top right.) The subdomain $\omega_1$ is extended by the
		distance $r$ to define $\omega_{1}^+$ which then consists of the orange
		and the grey elements. Larger $r$
		will decrease
		the size of the global reduced linear system
		while increasing the size of the
		local basis reduction problem.
        (Bottom.) The index sets $I$ and $B$ are used in the
        definition of a discrete lifting operator
	from $\partial \omega_1^+$ to $\omega_1$ in \cref{sec:lowrank}.}
	\label{fig:decomposition}
\end{figure}

A subspace of the above finite element space
with Dirichlet data on the boundary $\partial \omega_i^+ \setminus \partial \Omega$
is denoted by
\[
V_{i,z}^+ = \{ w \in V_i^+ : w|_{\partial \omega_i^+ \setminus \partial \Omega} = z\}.
\]
In the following, we use both the space $V_{i,z}^+$ with an arbitrary boundary condition and $V_{i,0}^+$ with zero boundary. Moreover, we denote the trace space by
\begin{equation}
\partial V_i^+ = \{ w \in L^2(\partial \omega_i^+) : \exists \xi \in V~\text{s.t.}~\xi|_{\partial \omega_i^+} = w \}.
\end{equation}
Consequently, we can define the following subproblem for each extended subdomain $\omega_i^+$:
\begin{problem}[Subproblem]
\label{prob:aux}
\red{Given $z\in \partial V_i^+$}, find $u_i \in V_{i,z}^+$ such that
\begin{equation}
    \label{subproblem}
    \int_{\omega_{i}^+} \nabla u_i \cdot \nabla v \,dx = \int_{\omega_{i}^+} fv\,dx \quad \forall v \in V_{i,0}^+.
\end{equation}
\end{problem}
Similarly to \cite{babuska_optimal_2011}, we use \cref{prob:aux}
to construct a reduced basis within the subdomain $\omega_i$
and, finally, combine the bases from the different subdomains in order to
solve \cref{prob:reduced}.
The solution to \cref{prob:aux} can be further split as $u_i = u_{i,0} + u_{i,z}$ where $u_{i,0}$ and $u_{i,z}$ are solutions to the following two auxiliary problems:

\begin{problem}[Subproblem with zero boundary]
\label{prob:aux1}
Find $u_{i,0} \in V_{i,0}^+$ such that
\begin{equation}
    \label{trivialbc}
    \int_{\omega_{i}^+} \nabla u_{i,0} \cdot \nabla v \,dx = \int_{\omega_{i}^+} fv\,dx \quad \forall v \in V_{i,0}^+.
\end{equation}
\end{problem}

\begin{problem}[Subproblem with zero load]
\label{prob:aux2}
\red{Given $z\in \partial V_i^+$}, find $u_{i,z} \in V_{i,z}^+$ such that
\begin{equation}
    \label{nontrivialbc}
    \int_{\omega_{i}^+} \nabla u_{i,z} \cdot \nabla v \,dx = 0 \quad \forall v \in V_{i,0}^+.
\end{equation}
\end{problem}

\Cref{prob:aux2} defines a linear \emph{lifting operator} $\Zh_i : \partial V_{i}^+ \rightarrow V_i, z \mapsto u_{i,z}|_{\omega_i}$.
Notice that the lifting operator restricts the solution of \cref{prob:aux2} to $\omega_i$.
The operator coincides with the transfer operator in \cite{buhr_randomized_2018}.

Let $M_i = \dim \partial V_{i}^+$ and $\Set{\varphi_{i,j}}_{j=1}^{M_i}$ be a basis for $\partial V_i^+$.
Using the solutions of \cref{prob:aux1} and \cref{prob:aux2}, due to the additive splitting $u_i = u_{i,0} + u_{i,z}$ and linearity of $\Zh_i$,
for each $z = \sum_{j=1}^{M_i} b_{i,j} \varphi_{i,j} \in \partial V_{i}^+$ it holds
\[
u_i|_{\omega_i} = u_{i,0}|_{\omega_i} + u_{i,z}|_{\omega_i} = u_{i,0}|_{\omega_i} + \Zh_i z = u_{i,0}|_{\omega_i} + \sum_{j=1}^{M_i} b_{i,j} \Zh_i \varphi_{i,j}
.\]
This implies that the basis
\begin{equation}
\label{eq:fullbasis}
\{ u_{i,0}|_{\omega_i}, \Zh_i \varphi_{i,1}, \dots, \Zh_i \varphi_{i,M_i} \}
\end{equation}
can represent exactly the solution
of \cref{prob:aux} for any $z \in \partial V_i^+$.
Since the trace of the original solution to 
\cref{prob:dweak}
belongs to $\partial V_i^+$,
it can be also represented by the above basis.

Computing the basis \eqref{eq:fullbasis} requires solving
$M_i + 1$ discrete problems in $V_{i}^+$.
Unfortunately, the dimension $M_i$ can be substantial for larger three-dimensional problems, e.g., for piecewise linear elements $M_i$ equals to the number of nodes on the interface $\partial \omega_{i}^+ \setminus \partial \Omega$.
Therefore, it is better to avoid computing
and using all of the basis functions in \eqref{eq:fullbasis}
by performing
a low-rank approximation of $\Zh_i$.
Fortunately, the singular values of $\Zh_i$
exhibits almost exponential decay and only a small
subset of its range is necessary to approximate
it. 
By performing a low-rank approximation, we
arrive at the reduced basis
\begin{equation}
\label{eq:redbasis}
\{ q_{i,0}, q_{i,1}, \dots, q_{i,m_i} \}, \quad q_{i,j} \in V_i,
\end{equation}
where $q_{i,0} = u_{i,0}|_{\omega_i}$ and
$m_i \ll M_i$.

The details of the low-rank approximation
are important for the analysis and performance
of the presented technique. Hence,
they shall be carefully laid out
in \cref{sec:lowrank}.
Special care is devoted to proving how to 
compute the low-rank approximation in the correct 
norm, which is weighted with square roots of 
certain finite element matrices.
Furthermore, a simple optimization
for computing the low-rank approximation 
with randomized numerical linear algebra
is presented in the end of \cref{sec:implementation}.

After the reduced basis \eqref{eq:redbasis} has been computed for each subdomain
$\omega_i$, $i=1,\dots,n$,
we solve \cref{prob:reduced}
within a
reduced discrete space $\widetilde{V} \subset V$, $\dim \widetilde{V} \ll \dim V$.
The reduced space is obtained by
combining the reduced bases \eqref{eq:redbasis} from
different subdomains,
\begin{equation}
\label{eq:globalsolution}
\widetilde{V} = \{ \textstyle\sum_{i=1}^n \Sh_i w_i : w_i \in \widetilde{V}_i \}, \quad \widetilde{V}_i = \mathrm{span}\,\{q_{i,j}\}_{j=0,\dots,m_i},
\end{equation}
where the \emph{stitching operator} $\Sh_i : V_i \rightarrow V$
makes any discrete functions compatible
on the subdomain interfaces.
This is done by
setting the discrete function to zero on the boundary $\partial \omega_i$
and further extending it by zero on the complement $\Omega \setminus \omega_i$.

\begin{definition}[Stitching operator $\Sh_i$]
\label{def:stitching}
	Let $w_i\in V_i$. A \textit{stitching operator} $\Sh_i: V_i\rightarrow V$ satisfies
	\begin{align*}
	(\Sh_i w_i)(x) =	\begin{cases}
		w_i(x) &\text{ in } \omega_i\setminus \partial\omega_i,\\
		0 & \text{ otherwise.}
	\end{cases}
	\end{align*}
	for every node $x$ of the mesh $\mathcal{T}$.
\end{definition}

\red{
	In practice, with the one node overlap of the subdomains $\omega_i$, the stitching operator acts as a partition-of-unity style approach in combining the subdomains. This formulation simplifies programming the method significantly as there is no explicit cut-off function to include in the finite element assembly, present in the traditional partition-of-unity method~\cite{babuska_partition_1997}. The advantage is compounded when transforming very large problems into the reduced basis, as evident in \cref{sec:implementation}. A simple one-dimensional example is presented in Figure \ref{fig:stitching}. 
}
\begin{figure}[h]
\centering
    \includegraphics[width=0.5\textwidth]{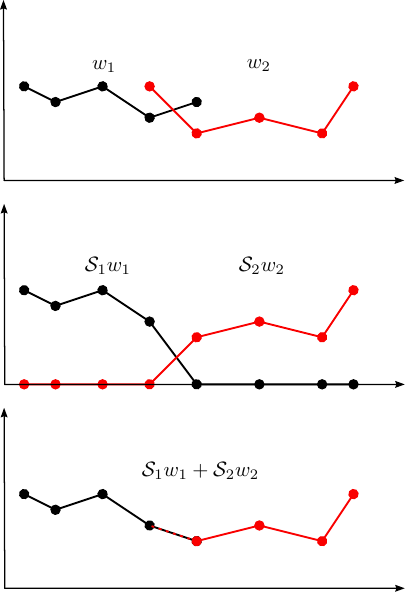}
	\caption{\red{An example of the stitching operator with two finite element functions (red and black) over two subdomains in one dimension.}}
    \label{fig:stitching}
\end{figure}

\section{Low-rank approximation of $\Zh_i$}

\label{sec:lowrank}

The optimal local approximation spaces \eqref{eq:redbasis} can be computed through an eigenvalue problem \cite{babuska_optimal_2011, calo_randomized_2016, hannukainen_distributed_2022} or a low-rank matrix approximation \cite{buhr_randomized_2018, schleus_optimal_2022}. We follow the latter approach, and add to the existing literature how to efficiently compute the low-rank approximation in the correctly weighted $\ell_2$ matrix norm.

It is well known that the best $k$-rank
approximation of a matrix in the
$\ell_2$ norm
is given by its truncated singular
value decomposition consisting of $k$ largest
singular values and the corresponding
singular vectors.
We wish to compute a low-rank
approximation of the lifting
operator $\Zh_i$ in the natural norms
of \cref{prob:aux}, i.e.~the operator norm
\begin{equation}
\label{eq:opnorm}
\|\Zh_i\| = \sup_{z \in \partial V_{i}^+} \frac{\| \Zh_i z \|_{1,\omega_i}}{\| z \|_{\partial V_{i}^+}}
\end{equation}
with the trace norm
\[
\| z \|_{\partial V_{i}^+} := \inf_{\substack{v \in V_{i}^+,\\ v|_{\partial \omega_{i}^+ \setminus \partial \Omega} = z}} \| v\|_{1,\omega_{i}^+}.
\]
The abstract fixed precision low-rank approximation problem is then of the form: given $\epsilon > 0,$ find $\widetilde \Zh_i$ such that
$\| \Zh_i - \widetilde{\Zh}_i \| < \epsilon$ with minimal rank.

Next we explain how the low-rank approximation
can be implemented in terms of $\bs{Z}_i$, the matrix
representation of $\Zh_i$, which is given by
\begin{equation*}
\Zh_i \red{\varphi_{i,j}} = \sum_{k}q_{i,k}(\bm Z_i \bm e_j)_k. 
\end{equation*}
Let us denote by $\bm A_i^+$
the stiffness matrix corresponding to \cref{prob:aux}.
The stiffness matrix admits the block structure
\[
	\bm A_i^+ = \begin{bmatrix}\bm A_{II,i}^+ & \bm A_{IB,i}^+ \\ \bm A_{BI,i}^+ & \bm A_{BB,i}^+\end{bmatrix}
\]
where the index set $I$ denotes
the degrees-of-freedom located in the interior of
$\omega_i^+$ and
$B$ denotes the degrees-of-freedom located on the boundary
$\partial \omega_{i}^+ \setminus \partial \Omega$; see \cref{fig:decomposition}.
An alternative expression for $\bs{Z}_i$,
using the stiffness matrix reads
\[
 \bm Z_i = \bm C_i (\bm A_{II,i}^+)^{-1}\bm A_{IB, i}^+,
\]
where $\bm C_i \in \mathbb{R}^{n_i \times N_i}$ is a restriction matrix from
the interior nodes of $V_i^+$ to $V_i$ where $N_i = \dim V_i^+ - M_i$.

Now we can represent the operator norm in terms of the matrix $\ell_2$ norm.
Firstly, the numerator within the operator norm \eqref{eq:opnorm} can be written
as
\[
\| \Zh_i z \|_{1,\omega_i} = \| \bs{R}_i \bs{Z}_i \bs{\beta}_z \|_{\ell_2}
\]
where $\bs{R}_i$ is the Cholesky factor of the finite element matrix corresponding to the bilinear form
\[
\int_{\omega_i} (\nabla w \cdot \nabla v + w v)\,dx, \quad w, v \in V_i,
\]
and $\bs{\beta}_z = [b_1, \dots, b_{M_i}]$ is the coefficient
vector corresponding to $z \in \partial V_i^+$.

Secondly, the denominator can be written in terms of $\bs{\beta}_z$, with the help of the following lemmata.
\begin{lemma}
	\label{lem:blockchol}
	Let $\bm A$ be a square symmetric and positive definite matrix with the block structure
 \[
	\bm A = \begin{bmatrix}
    \bm A_{II} & \bm A_{IB} \\
    \bm A_{BI} & \bm A_{BB}
    \end{bmatrix}
\]
 and the Cholesky factor
 \[
 \bm R = \begin{bmatrix}
\bm R_{II} & \bm R_{IB} \\ \bm 0 & \bm R_{BB}
 \end{bmatrix}.
 \]
 Then $\bm A / \bm A_{II} = \bm R_{BB}^T \bm R_{BB}$
 where $\bm A / \bm A_{II} = \bm A_{BB} - \bm A_{BI}\bm A_{II}^{-1}\bm  A_{IB}$
 is the Schur's complement.
\end{lemma}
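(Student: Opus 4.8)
The plan is to exploit directly the defining relation of the Cholesky factorization, namely $\bm A = \bm R^T \bm R$ with $\bm R$ upper triangular, and to read off the individual blocks. First I would observe that since $\bm A$ is symmetric positive definite, so is its leading principal submatrix $\bm A_{II}$; hence $\bm A_{II}$ admits a nonsingular Cholesky factor and the leading block $\bm R_{II}$ of $\bm R$ is invertible. This invertibility is exactly what makes the Schur complement and the manipulations below well defined, so I would record it at the outset.

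Next I would carry out the block multiplication $\bm R^T \bm R$ explicitly. Writing
\[
\bm R^T \bm R = \begin{bmatrix} \bm R_{II}^T & \bm 0 \\ \bm R_{IB}^T & \bm R_{BB}^T \end{bmatrix} \begin{bmatrix} \bm R_{II} & \bm R_{IB} \\ \bm 0 & \bm R_{BB} \end{bmatrix}
\]
and matching the result block-by-block with $\bm A$ yields the four identities $\bm A_{II} = \bm R_{II}^T \bm R_{II}$, $\bm A_{IB} = \bm R_{II}^T \bm R_{IB}$, $\bm A_{BI} = \bm R_{IB}^T \bm R_{II}$, and $\bm A_{BB} = \bm R_{IB}^T \bm R_{IB} + \bm R_{BB}^T \bm R_{BB}$.

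With these in hand the conclusion is a short computation. Since $\bm A_{II}^{-1} = \bm R_{II}^{-1} \bm R_{II}^{-T}$, the cross term telescopes,
\[
\bm A_{BI} \bm A_{II}^{-1} \bm A_{IB} = \bm R_{IB}^T \bm R_{II} \, \bm R_{II}^{-1} \bm R_{II}^{-T} \, \bm R_{II}^T \bm R_{IB} = \bm R_{IB}^T \bm R_{IB},
\]
so that substituting into the definition of the Schur complement gives
\[
\bm A / \bm A_{II} = \bm A_{BB} - \bm A_{BI} \bm A_{II}^{-1} \bm A_{IB} = \left( \bm R_{IB}^T \bm R_{IB} + \bm R_{BB}^T \bm R_{BB} \right) - \bm R_{IB}^T \bm R_{IB} = \bm R_{BB}^T \bm R_{BB},
\]
which is the claim.

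There is no serious obstacle here: the statement is essentially the observation that block Gaussian elimination and block Cholesky are the same process, and the only point genuinely requiring justification is the invertibility of $\bm R_{II}$ (equivalently, that $\bm A_{II}$ inherits positive definiteness from $\bm A$), which I would dispatch first. If one prefers to avoid writing $\bm R_{II}^{-1}$ explicitly, the identities $\bm A_{IB} = \bm R_{II}^T \bm R_{IB}$ and $\bm A_{BI} = \bm R_{IB}^T \bm R_{II}$ can instead be used to express $\bm A_{BI} \bm A_{II}^{-1} \bm A_{IB}$ in terms of $\bm R_{IB}$ alone, producing the same cancellation.
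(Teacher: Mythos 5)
Your proof is correct. Note that the paper itself states this lemma without any proof, treating it as a standard linear algebra fact (only the subsequent Lemma~3.2 receives a proof, in the appendix); your block-multiplication argument --- expanding $\bm R^T \bm R$, matching blocks, and cancelling the cross term via $\bm A_{II}^{-1} = \bm R_{II}^{-1}\bm R_{II}^{-T}$ --- is precisely the canonical derivation the paper implicitly relies on, and your preliminary observation that $\bm A_{II}$ inherits positive definiteness (so $\bm R_{II}$ is invertible and the Schur complement is well defined) is the one point genuinely worth recording.
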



\begin{lemma}
 \label{lem:boundarytol2norm}
 Choose $\bm A$ in \cref{lem:blockchol} as the finite element matrix corresponding
 to the bilinear form
\[
\int_{\omega_i^+} (\nabla w \cdot \nabla v + w v)\,dx, \quad w, v \in V_i^+.
\]
 It holds
 \begin{align*}
		\|z\|_{\partial V_{i}^+} = \norm[\ell_2]{\bs{R}_{BB,i} \bm\beta_z},
\end{align*}
where $\bm R_{BB,i}$ is the bottom right Cholesky factor of $\bm A$.
\end{lemma}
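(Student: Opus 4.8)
The plan is to reduce the infimum defining $\|z\|_{\partial V_i^+}$ to a finite-dimensional quadratic minimization, recognize the minimum value as a Schur complement quadratic form, and then invoke \cref{lem:blockchol}. Fix a subdomain index $i$ and a nodal basis of $V_i^+$ ordered so that the interior degrees-of-freedom ($I$) precede the boundary degrees-of-freedom ($B$) on $\partial\omega_i^+\setminus\partial\Omega$. For $v\in V_i^+$ with coefficient vector $\bm v=[\bm v_I;\,\bm v_B]$, the choice of $\bm A$ as the Gram matrix of the $H^1$ inner product on $V_i^+$ gives $\|v\|_{1,\omega_i^+}^2=\bm v^T\bm A\bm v$. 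Because the basis is nodal, the constraint $v|_{\partial\omega_i^+\setminus\partial\Omega}=z$ is equivalent to fixing the boundary block $\bm v_B=\bm\beta_z$ while $\bm v_I$ ranges freely, so the defining infimum becomes the unconstrained minimization
\[
\|z\|_{\partial V_i^+}^2=\min_{\bm v_I}\begin{bmatrix}\bm v_I\\\bm\beta_z\end{bmatrix}^T\bm A\begin{bmatrix}\bm v_I\\\bm\beta_z\end{bmatrix}.
\]

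Next I would carry out the minimization explicitly. Expanding the block form and using $\bm A_{BI}=\bm A_{IB}^T$ gives $\bm v_I^T\bm A_{II}\bm v_I+2\bm v_I^T\bm A_{IB}\bm\beta_z+\bm\beta_z^T\bm A_{BB}\bm\beta_z$. Since $\bm A$ is symmetric positive definite, its principal block $\bm A_{II}$ is also positive definite, so this is a strictly convex quadratic in $\bm v_I$ with unique minimizer $\bm v_I^\star=-\bm A_{II}^{-1}\bm A_{IB}\bm\beta_z$. Substituting back and collecting terms leaves exactly the Schur complement quadratic form $\bm\beta_z^T(\bm A_{BB}-\bm A_{BI}\bm A_{II}^{-1}\bm A_{IB})\bm\beta_z=\bm\beta_z^T(\bm A/\bm A_{II})\bm\beta_z$.

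Finally, I would apply \cref{lem:blockchol} to write $\bm A/\bm A_{II}=\bm R_{BB}^T\bm R_{BB}$, so that $\|z\|_{\partial V_i^+}^2=\bm\beta_z^T\bm R_{BB}^T\bm R_{BB}\bm\beta_z=\|\bm R_{BB}\bm\beta_z\|_{\ell_2}^2$; taking square roots yields the claim. I expect the only genuinely delicate point to be the second sentence of the argument, namely verifying that the infimum over all admissible liftings $v$ with trace $z$ coincides with the discrete minimization over interior coefficients with $\bm v_B$ fixed. This hinges on the nodal interpolation property: every admissible lifting is a finite element function whose boundary trace is determined solely by its boundary coefficients, so that no candidate liftings are lost in passing to the coefficient picture and the constraint is captured exactly by $\bm v_B=\bm\beta_z$. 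The remaining steps, the convex minimization and the Cholesky identification, are routine given \cref{lem:blockchol}.
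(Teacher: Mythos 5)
Your proposal is correct and follows essentially the same route as the paper's proof: both reduce the trace-norm infimum to an unconstrained quadratic minimization over the interior coefficients with the boundary block fixed at $\bm\beta_z$, solve it to obtain the Schur complement quadratic form $\bm\beta_z^T(\bm A_{BB}-\bm A_{BI}\bm A_{II}^{-1}\bm A_{IB})\bm\beta_z$, and then invoke \cref{lem:blockchol} to identify it with $\norm[\ell_2]{\bm R_{BB}\bm\beta_z}^2$. The only cosmetic difference is that the paper first splits the $H^1$ norm into mass- and stiffness-matrix contributions before summing them into the Gram matrix $\bm A$, whereas you work with the $H^1$ Gram matrix directly.
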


\begin{proof}
	See \cref{proof:boundarytol2norm}.
\end{proof}

\red{\Cref{lem:boundarytol2norm} is very important in weighing the input in the matrix context. The operator norm \eqref{eq:opnorm} in the discrete setting becomes}
\begin{equation}
\label{eq:dopnorm}
\|\Zh_i\| = \sup_{z \in \partial V_{i}^+} \frac{\| \Zh_i z \|_{1,\omega_i}}{\| z \|_{\partial V_{i}^+}} = \max_{\bs{\beta}_z} \frac{\| \bs{R}_i \bs{Z}_i \bs{\beta}_z \|_{\ell_2}}{\| \bs{R}_{BB,i} \bs{\beta}_z \|_{\ell_2}} = \| \bs{R}_i \bs{Z}_i \bs{R}_{BB,i}^{-1} \|_{\ell_2}.
\end{equation}
\red{The equality \eqref{eq:dopnorm} is crucial, but other authors have computed low-rank approximations without the weights or dismissed them due to difficulties in forming $\bm R_{BB,i}$ and its inverse \cite[Supplementary Material 4]{buhr_randomized_2018}. Without the proper weights, the low-rank matrix approximation is incompatible with the continuous problem.}

We propose that \Cref{lem:blockchol} gives a novel and efficient way to compute $\bm R_{BB,i}$ without forming the Schur complement explicitly. In particular, one computes the Cholesky factor of $\bm A$ and reads the upper triangle matrix $\bm R_{BB}$ from it. The inverse is not required explicitly, and $\bm R_{BB,i}^{-1}$ arises only by solving few systems of equations, see end of \cref{sec:implementation}. Computing $\bm R_{BB,i}$ by explicitly solving the Schur's complement $\bm A/\bm A_{II} = \bm A_{BB} - \bm A_{BI}\bm A_{II}^{-1}\bm A_{IB}$ is much more expensive, which we exhibit in \cref{sec:numtests}. 
In practice, both approaches rely on the Cholesky decomposition, but the latter requires solving increasingly many large systems of equations and an additional matrix multiplication. The proper weight matrices become increasingly hard to compute without our optimization.

As a conclusion, the optimal low-rank
approximation of the operator $\Zh_i$ in the $H^1$ norm
is obtained through the singular value
decomposition of the matrix product
\begin{equation}
\label{eq:svd}
\bs{R}_i \bs{Z}_i \bs{R}_{BB,i}^{-1} = \bs{U}_i \bs{\Lambda}_i \bs{V}_i^T.
\end{equation}
With both $\bs{R}_i$ and $\bs{R}_{BB,i}$ square and full rank, the low-rank
matrix approximation of $\Zh_i$ can be given as the product $\widetilde{\bs{Z}}_i =
\bs{R}_i^{-1} \bs{T}_i \bs{R}_{BB,i}$ for some $\bs{T}_i$ which is obtained by
truncating the singular value decomposition
\eqref{eq:svd}
in the sense that
\begin{equation}
\label{eq:svdtol}
\norm{\Zh_i-\widetilde \Zh_i} = \norm[\ell_2]{\bs{R}_i (\bs{Z}_i - \bs{\widetilde Z}_i)\bs{R}_{BB,i}^{-1}} = \norm[\ell_2]{\bs{R}_i \bs{Z}_i \bs{R}_{BB,i}^{-1} - \bs{T}_i} < \epsilon,
\end{equation}
where $\epsilon > 0$ is a user specified tolerance.
Moreover, the reduced basis $q_{1,i},\dots,q_{m_i,i}$ in \eqref{eq:redbasis} is given by the 
columns of $\bs{R}_i^{-1} \bs{U}_i$ which are directly computable, see 
\cref{sec:implementation}. The number of columns $m_i\ll M_i$ is indirectly
specified by the extension parameter $r$ and the tolerance $\epsilon$.

\section{Error analysis}

\label{sec:analysis}

Next, we derive the error estimate for the reduced discrete problem, \cref{prob:reduced}. The analysis is similar to \cite{babuska_optimal_2011, buhr_randomized_2018}.
The local basis reduction error can be controlled solely with the tolerance
parameter, $\epsilon > 0$, which can be adjusted so that the
global error is dominated by the finite element discretization error.
In the following $\widetilde{V}$ and $\widetilde{V}_i$, $i=1,\dots,n$, are
given by \eqref{eq:globalsolution}.

Let $\phi \in H^1_0(\Omega)$ be the continuous solution to
\cref{prob:weak} and $u \in V$ be the
discrete solution to \cref{prob:dweak}. Further, let $\widetilde{u} \in \widetilde{V}$ be the
reduced solution to \cref{prob:reduced}.
Because \eqref{eq:rdweak} is a Ritz--Galerkin method,
there exists $C>0$ so that 
\begin{equation}
\label{eq:cea}
	\norm[1,\Omega]{\phi-\widetilde{u}} \leq C \norm[1,\Omega]{\phi-\widetilde{v}} \leq C(\norm[1,\Omega]{\phi- u}+\norm[1,\Omega]{u-\widetilde v}) \quad \forall \widetilde{v} \in \widetilde{V}.
\end{equation}
The first term can be estimated using techniques from the standard finite element method a
priori error analysis. Thus, we focus on the second term $\norm[1,\Omega]{u-\widetilde
v}$ and demonstrate how $\widetilde v$ can
be constructed subdomain-by-subdomain to
reveal its
dependency on the tolerance $\epsilon > 0$
of the low-rank approximation.

\subsection{Estimate on a single subdomain}

For the solution $u$
of \cref{prob:dweak} it holds $ u|_{\omega_i} = u_{i,0}|_{\omega_i} + \Zh_i (u|_{\partial \omega_{i}^+})$ where $u_{i,0}$ is the solution of \cref{prob:aux1}.
Let
\begin{equation}
\label{eq:vi}
\widetilde v_i = u_{i,0}|_{\omega_i} + \widetilde\Zh_i (u|_{\partial \omega_{i}^+})
\end{equation}
where $\widetilde\Zh_i : \partial V_{i}^+ \rightarrow \widetilde V_i$ is defined through
the action of the low-rank matrix approximation $\widetilde{\bs{Z}}_i$. Notice that the nontrivial load term $u_{i,0}$ is identical for both $u$ and $\widetilde v_i$.
Therefore, the error can be bounded as
\begin{equation}
	\label{eq:localbound}
	\norm[1,\omega_i]{u-\widetilde v_i} \leq \|\Zh_i - \widetilde \Zh_i\|\norm[\partial V_{i}^+]{u|_{\partial \omega_{i}^+}} \leq \|\Zh_i - \widetilde \Zh_i\| \| u \|_{1,\omega_i^+}.
\end{equation}

Combining \eqref{eq:localbound} with \eqref{eq:svdtol}, we arrive to the following lemma.

\begin{lemma}
\label{lem:localbound}
Let $u \in V$ be the solution
of \cref{prob:dweak} and
$\widetilde{v}_i \in \widetilde{V}_i \subset V_i$ be
defined via \eqref{eq:vi}. It holds
$$
\frac{\norm[1,\omega_i]{ u-\widetilde
v_i}}{\norm[1,\omega_i^+]{ u}} < \epsilon.
$$
\end{lemma}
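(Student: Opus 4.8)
The plan is to combine the local error representation \eqref{eq:localbound} with the singular-value truncation bound \eqref{eq:svdtol}; the whole argument is a short chain of inequalities, so the work lies in justifying each link cleanly rather than in any new estimate.

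First I would recall why the error reduces to a single application of the operator difference $\Zh_i - \widetilde\Zh_i$. Since $u|_{\omega_i} = u_{i,0}|_{\omega_i} + \Zh_i(u|_{\partial\omega_i^+})$ and, by \eqref{eq:vi}, $\widetilde v_i = u_{i,0}|_{\omega_i} + \widetilde\Zh_i(u|_{\partial\omega_i^+})$, the zero-boundary load part $u_{i,0}|_{\omega_i}$ cancels and $u - \widetilde v_i = (\Zh_i - \widetilde\Zh_i)(u|_{\partial\omega_i^+})$ on $\omega_i$. Using the linearity of both operators and the definition of the operator norm \eqref{eq:opnorm} applied to the difference, this gives $\norm[1,\omega_i]{u - \widetilde v_i} \le \norm{\Zh_i - \widetilde\Zh_i}\,\norm[\partial V_i^+]{u|_{\partial\omega_i^+}}$.

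Next I would bound the trace norm by the full $H^1$ norm over the extended subdomain. The trace norm is defined as an infimum of $\norm[1,\omega_i^+]{v}$ over all lifts $v \in V_i^+$ agreeing with the prescribed data on $\partial\omega_i^+\setminus\partial\Omega$; taking $v = u|_{\omega_i^+}$ as one admissible competitor (it lies in $V_i^+$, vanishes on $\partial\Omega$, and has the correct trace) yields $\norm[\partial V_i^+]{u|_{\partial\omega_i^+}} \le \norm[1,\omega_i^+]{u}$. This completes \eqref{eq:localbound}, and it is the only link that is a genuine inequality rather than a substitution of a definition, so it is the step I would state most carefully — in particular checking that $u|_{\omega_i^+}$ is indeed an element of $V_i^+$ so that it qualifies as a lift in the infimum.

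Finally, inserting the truncation estimate \eqref{eq:svdtol}, namely $\norm{\Zh_i - \widetilde\Zh_i} < \epsilon$, gives $\norm[1,\omega_i]{u - \widetilde v_i} < \epsilon\,\norm[1,\omega_i^+]{u}$, and dividing by $\norm[1,\omega_i^+]{u}$ yields the claimed relative bound. I would note that the strict inequality survives because \eqref{eq:svdtol} is itself strict, and I would dispose of the degenerate case $u|_{\omega_i^+}\equiv 0$ separately, in which the numerator vanishes and the stated ratio is vacuous. There is no real obstacle at this stage: the substantive content was already established upstream in \eqref{eq:dopnorm}, which identifies the continuous operator norm of $\Zh_i$ with the weighted matrix $\ell_2$ norm and thereby lets the truncated SVD control $\norm{\Zh_i - \widetilde\Zh_i}$ in exactly the norm appearing in \eqref{eq:localbound}.
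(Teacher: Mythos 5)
Your proof is correct and follows essentially the same route as the paper, which establishes the lemma in the text immediately preceding it: cancellation of the common load term $u_{i,0}|_{\omega_i}$, the operator-norm bound \eqref{eq:localbound} with $u|_{\omega_i^+}$ as an admissible competitor in the trace-norm infimum, and then the truncation tolerance \eqref{eq:svdtol}. Your explicit checks that $u|_{\omega_i^+}\in V_i^+$ qualifies as a lift and that the degenerate case $u|_{\omega_i^+}\equiv 0$ must be set aside are details the paper leaves implicit, but they do not change the argument.
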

Thus, 
we interpret the truncation tolerance $\epsilon$
as a local bound for
the relative error caused
by the introduction of the reduced basis.
The bound is in almost all practical cases very crude. This is because the first inequality in \eqref{eq:localbound} moves from a specific boundary trace of $u$ and $\widetilde v_i$ to the operator norm $\norm{\Zh_i - \widetilde \Zh_i}$, which has to account for \textit{all possible boundary conditions}, including very pathological ones. This means that the method tends to be locally considerably more accurate than the given tolerance $\epsilon$, especially when a relatively smooth load $f$ is used.

\subsection{Full error estimate}

Any discrete function $v \in V$ can be split as
\[
v = \sum_{i=1}^n \Sh_i(v|_{\omega_i}),
\]
where the stitching operator $\Sh_i$
is given by \cref{def:stitching}.
Moreover, all functions in the reduced space $\widetilde{V}$
have the above form by definition.
Hence, we will first show
an asymptotic bound for the action
of $\Sh_i$
and then present the full error estimate which
combines the result of \cref{lem:localbound}
from all subdomains.

\begin{lemma}
\label{lem:stitchbound}
    There exists $C>0$ such that
    \[
    \| \Sh_i \| \leq C(1+ h^{-1}).
    \]
\end{lemma}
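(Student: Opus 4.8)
The plan is to estimate the operator norm in the natural $H^1$ setting inherited from the error analysis, i.e.
\[
\|\Sh_i\| = \sup_{w_i \in V_i \setminus \{0\}} \frac{\|\Sh_i w_i\|_{1,\Omega}}{\|w_i\|_{1,\omega_i}}.
\]
The first step is a support reduction. By \cref{def:stitching}, the function $\Sh_i w_i$ vanishes at every node on $\partial\omega_i$ and at every node of $\mathcal{T}$ outside $\omega_i$. Since the subdomain boundary $\partial\omega_i$ does not cut through any element, every element not belonging to $\Th_i$ has all of its nodal values set to zero, so $\Sh_i w_i$ is identically zero there. Consequently $\Sh_i w_i$ is a legitimate element of $V$ (continuous, piecewise linear, zero on $\partial\Omega$) and its entire $H^1$ mass is carried inside $\omega_i$, giving $\|\Sh_i w_i\|_{1,\Omega} = \|\Sh_i w_i\|_{1,\omega_i}$. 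Thus the whole estimate localizes to $\omega_i$.

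Next I would write $\Sh_i w_i|_{\omega_i} = w_i - I_\partial w_i$, where $I_\partial w_i \in V_i$ is the piecewise linear function carrying the nodal values of $w_i$ at the nodes on $\partial\omega_i$ and zero at all interior nodes of $\omega_i$. The key structural point is that $I_\partial w_i$ is supported only on the single layer of elements of $\Th_i$ touching $\partial\omega_i$. A triangle inequality then gives $\|\Sh_i w_i\|_{1,\omega_i} \le \|w_i\|_{1,\omega_i} + \|I_\partial w_i\|_{1,\omega_i}$, so it suffices to bound $\|I_\partial w_i\|_{1,\omega_i}$ by a constant multiple of $(1+h^{-1})\|w_i\|_{1,\omega_i}$.

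For the $L^2$ part I would invoke the spectral equivalence of the $P_1$ mass matrix with $h^d \bm I$ (a standard consequence of shape regularity and scaling to the reference element): the $L^2$ norm of any piecewise linear function is comparable to $h^{d/2}$ times the $\ell_2$ norm of its nodal coefficient vector. As the coefficient vector of $I_\partial w_i$ is the coefficient vector of $w_i$ with the interior entries zeroed out, this yields $\|I_\partial w_i\|_{0,\omega_i} \lesssim \|w_i\|_{0,\omega_i}$, with no negative power of $h$. For the gradient I would apply the inverse inequality $\|\nabla v\|_{0,T} \lesssim h^{-1}\|v\|_{0,T}$ elementwise on the boundary layer, obtaining $|I_\partial w_i|_{1,\omega_i} \lesssim h^{-1}\|I_\partial w_i\|_{0,\omega_i} \lesssim h^{-1}\|w_i\|_{0,\omega_i}$. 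Combining the two contributions gives $\|I_\partial w_i\|_{1,\omega_i} \lesssim (1+h^{-1})\|w_i\|_{1,\omega_i}$, and hence the claim with a constant $C$ depending only on the shape-regularity of $\mathcal{T}$. The main obstacle, and precisely the source of the $h^{-1}$, is the gradient of the truncation: forcing the boundary nodal values to zero across an element layer of width $\sim h$ produces an $\Oh(1/h)$ slope no matter how smooth $w_i$ is, which the inverse inequality captures exactly. The one point requiring care is that every scaling constant (mass-matrix equivalence, inverse inequality) must be uniform in $i$ and in the mesh, so the argument is phrased entirely through shape-regular reference-element estimates rather than any global quantity; the benign $L^2$ term is what upgrades the dominant $h^{-1}$ to the stated $1 + h^{-1}$.
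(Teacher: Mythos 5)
Your proof is correct, but it takes a different route from the paper's. The paper's own argument is shorter: since $\Sh_i w$ is itself a piecewise linear function supported in $\omega_i$, it applies the inverse estimate to the \emph{whole} truncated function, $\norm[1,\Omega]{\Sh_i w}^2 \leq (1+C_{inv}h^{-2})\norm[0,\Omega]{\Sh_i w}^2$, then asserts $\norm[0,\Omega]{\Sh_i w}\leq\norm[0,\omega_i]{w}$ and bounds the denominator below by $\norm[0,\omega_i]{w}$. You instead write $\Sh_i w = w - I_\partial w$ on $\omega_i$, use the triangle inequality, and apply the inverse estimate only to the boundary-layer interpolant $I_\partial w$. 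Both routes rest on the same two ingredients (an inverse inequality plus the nodal $\ell_2$--$L^2$ equivalence of the mass matrix), and both implicitly assume quasi-uniformity when the elementwise $h_T^{-1}$ is promoted to a global $h^{-1}$. Your version buys two things. First, it localizes the $h^{-1}$ loss to the single layer of elements along $\partial\omega_i$: your intermediate bound is essentially $\norm[1,\Omega]{\Sh_i w} \leq C\bigl(\norm[1,\omega_i]{w} + h^{-1}\norm[0,\Lambda_i]{w}\bigr)$ with $\Lambda_i$ the boundary strip, which gives quantitative backing to the paper's remark that the $h^{-1}$ dependency manifests ``mostly locally.'' Second, your explicit appeal to the mass-matrix equivalence repairs a small inaccuracy in the paper's proof: the inequality $\norm[0,\Omega]{\Sh_i w}\leq\norm[0,\omega_i]{w}$ with constant one is not literally true, since zeroing a nodal value can \emph{increase} the $L^2$ norm of a piecewise linear function (e.g., nodal values $(1,-\tfrac{1}{2})$ on a single 1D element of length $h$ give squared norm $\tfrac{h}{4}$, while zeroing the second value gives $\tfrac{h}{3}$); the correct statement carries a shape-regularity constant, exactly as you derive it, and this constant is harmlessly absorbed into $C$.
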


\begin{proof}
	The operator norm is
	\begin{align*}
		\norm{\Sh_i} & = \sup_{w \in V_i} \frac{\norm[1,\Omega]{\Sh_i w}}{\norm[1,\omega_i]{w}}.
	\end{align*}
	We start by bounding the numerator. There exists $C_{inv}>0$, independent of $h$  \cite{verfurth_posteriori_2013}, such that 
	\begin{align}
		\label{eq:sbound1}
		\norm[1,\Omega]{\Sh_i w}^2 = \norm[0,\Omega]{\Sh_i w}^2 + \norm[0,\Omega]{\nabla\Sh_i w}^2 \leq (1 + C_{inv} h^{-2})\norm[0,\Omega]{\Sh_i w}^2.
	\end{align}
	Notice that $\text{supp}(\Sh_i w)\subset \omega_i$ so $\norm[0,\Omega]{\Sh_i w}\leq \norm[0,\omega_i]{w}$.

	Now, the denominator is bounded below by
	$$\norm[0,\omega_i]{w}^2 \leq \norm[0,\omega_i]{w}^2 + \norm[0,\omega_i]{\nabla w}^2 = 
	\norm[1,\omega_i]{w}^2,$$ thus 
	\begin{align*}
		\norm{\Sh_i}\leq \frac{\sqrt{1+C_{inv} h^{-2}}\norm[0,\omega_i]{w}}{\norm[0,\omega_i]{w}} \leq C (1 + h^{-1}).
	\end{align*}

\end{proof}

\begin{remark}
	The $h^{-1}$ dependency fortunately manifests only in few pathological cases, and even in those mostly locally. This dependency could be circumvented, e.g., by using
the Lagrange multiplier method or Nitsche's method for combining the local solutions. However, it will be present via the derivative of the cut-off function also when using the standard partition of unity approach.
\end{remark}

\begin{assumption}[Partition overlap]
	\label{ass:overlap}
	Let the triangulation/tetrahedralization $\Th$ be partitioned to $n$ overlapping local meshes $\Th_{i}^+$. \red{Let $m\in\N, m < n,$ such that any element $T\in\Th$ is included in at most $m$ local meshes.} 
\end{assumption}

\Cref{ass:overlap} is satisfied when the extension parameter $r$ is
kept within reasonable limits, such as under half of the subdomain diameter.
Then, $m$ will be a very small integer as any elements are included only in the
intersection of the nearest few subdomains. The number of subdomains $n$ can,
however, grow without influencing $m$. Hence, for relevant problems \red{and decompositions} $m\ll n$.

We can use \cref{ass:overlap} to connect the bound
\eqref{eq:localbound} more effectively with a global error bound. The issue
with \eqref{eq:localbound} is that the norm is transformed from $V_i$ to
$V_i^+$.
Given \cref{ass:overlap}, it is immediate that $$\sum_{i=1}^n \norm[1,\omega_i^+]{u} \leq
m\norm[1,\Omega]{u}.$$ This is also a loose bound because most elements only belong to
individual subdomains.
We are now ready to present the global error estimate.

\begin{theorem}
	\label{thm:globalbound}
 Let the finite element mesh partition satisfy \cref{ass:overlap}. Then there exists $C > 0$ such that
	\begin{align}
		\label{eq:globalbound}
		\| \phi - \widetilde{u} \|_{1,\Omega} \leq C(\| \phi - u \|_{1,\Omega} + m (1 + h^{-1}) \epsilon \| u \|_{1,\Omega}).
	\end{align}
\end{theorem}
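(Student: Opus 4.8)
The plan is to build on the C\'ea-type splitting already recorded in \eqref{eq:cea}, which reduces the claim to estimating $\norm[1,\Omega]{u-\widetilde v}$ for a single, cleverly chosen competitor $\widetilde v \in \widetilde V$; the term $\norm[1,\Omega]{\phi-u}$ is the standard finite element discretization error and is simply carried into \eqref{eq:globalbound}. The natural candidate is the subdomain-by-subdomain construction: take $\widetilde v = \sum_{i=1}^n \Sh_i \widetilde v_i$ with each $\widetilde v_i$ defined by \eqref{eq:vi}. Since $\widetilde v_i \in \widetilde V_i$, this $\widetilde v$ lies in $\widetilde V$ by \eqref{eq:globalsolution}, so it is admissible in \eqref{eq:cea}.

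First I would use the exact reconstruction $u = \sum_{i=1}^n \Sh_i(u|_{\omega_i})$, a special case of the splitting valid for every $v \in V$. Subtracting the two sums and using linearity of $\Sh_i$ gives $u - \widetilde v = \sum_{i=1}^n \Sh_i(u|_{\omega_i} - \widetilde v_i)$, exhibiting the global error as a sum of stitched local errors. Applying the triangle inequality, then the operator bound $\norm{\Sh_i} \le C(1+h^{-1})$ of \cref{lem:stitchbound}, and finally the local estimate $\norm[1,\omega_i]{u - \widetilde v_i} < \epsilon\,\norm[1,\omega_i^+]{u}$ of \cref{lem:localbound} yields
\[
\norm[1,\Omega]{u - \widetilde v} \le \sum_{i=1}^n \norm{\Sh_i}\,\norm[1,\omega_i]{u-\widetilde v_i} \le C(1+h^{-1})\,\epsilon \sum_{i=1}^n \norm[1,\omega_i^+]{u}.
\]
The last step is to trade the sum of oversampled local energies for a multiple of the global energy. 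This is precisely where \cref{ass:overlap} enters: the overlap-counting bound $\sum_{i=1}^n \norm[1,\omega_i^+]{u} \le m\,\norm[1,\Omega]{u}$ recorded just before the theorem produces the $m(1+h^{-1})\epsilon\,\norm[1,\Omega]{u}$ term, and combining with \eqref{eq:cea} while absorbing all mesh-independent constants into a single $C$ gives \eqref{eq:globalbound}.

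I expect the main obstacle to be the aggregation rather than the local estimates, which are already packaged in the two lemmas. The subtlety lies in the overlap counting: a direct triangle-inequality aggregation of the \emph{unsquared} local norms followed by Cauchy--Schwarz would accumulate a spurious $\sqrt n$ factor, so the clean linear dependence on $m$ is not automatic. The robust route, which I would prefer if the stated unsquared bound felt too loose, is to keep everything squared: bound $\norm[1,\Omega]{\sum_i \Sh_i w_i}^2 \le m \sum_i \norm[1,\Omega]{\Sh_i w_i}^2$ pointwise, since at most $m$ summands are nonzero at any point, then apply \cref{lem:stitchbound} and \cref{lem:localbound} in squared form together with $\sum_i \norm[1,\omega_i^+]{u}^2 \le m\,\norm[1,\Omega]{u}^2$; the two factors of $\sqrt m$ multiply to the advertised $m$ after taking square roots. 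Either way, the $h^{-1}$ is an unavoidable inheritance from the stitching operator and cannot be removed without changing how the local solutions are coupled.
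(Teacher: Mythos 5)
Your proposal follows the same high-level route as the paper's own proof: the C\'ea splitting \eqref{eq:cea}, the competitor $\widetilde v=\sum_{i=1}^n\Sh_i\widetilde v_i$ with $\widetilde v_i$ from \eqref{eq:vi}, the exact splitting $u=\sum_{i=1}^n\Sh_i(u|_{\omega_i})$, then \cref{lem:stitchbound}, \cref{lem:localbound}, and the overlap count from \cref{ass:overlap}. In fact, the paper's proof is precisely your first, unsquared aggregation.

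The important point is that your closing caveat is not a stylistic preference --- it flags a genuine gap in that unsquared chain, a gap the paper's own proof shares. The inequality $\sum_{i=1}^n\norm[1,\omega_i^+]{u}\le m\,\norm[1,\Omega]{u}$, stated just before the theorem and invoked in the last line of \eqref{eq:errorbound}, is false in general: what \cref{ass:overlap} yields immediately is the \emph{squared} bound $\sum_{i=1}^n\norm[1,\omega_i^+]{u}^2\le m\,\norm[1,\Omega]{u}^2$, and passing from there to a sum of unsquared norms costs a factor $\sqrt{n}$ by Cauchy--Schwarz, not a factor $\sqrt{m}$. Concretely, if the energy of $u$ is spread evenly over $n$ essentially disjoint subdomains, then $\sum_i\norm[1,\omega_i^+]{u}\approx\sqrt{mn}\,\norm[1,\Omega]{u}$, which exceeds $m\,\norm[1,\Omega]{u}$ as soon as $n>m$ --- exactly the regime $m\ll n$ that the paper targets. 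Your ``robust route'' repairs this: since at most $m$ of the stitched functions are nonzero at any point,
\begin{equation*}
\normbig[1,\Omega]{\textstyle\sum_{i}\Sh_i(u|_{\omega_i}-\widetilde v_i)}^2
\le m\sum_{i}\norm[1,\Omega]{\Sh_i(u|_{\omega_i}-\widetilde v_i)}^2
\le m^2C^2(1+h^{-1})^2\epsilon^2\norm[1,\Omega]{u}^2,
\end{equation*}
where the second step combines \cref{lem:stitchbound}, \cref{lem:localbound} (both squared) and the squared overlap bound; taking square roots gives exactly \eqref{eq:globalbound} with a constant independent of $n$. So your squared variant should be regarded as \emph{the} proof rather than an optional alternative: the unsquared aggregation, as written in the paper and in your first pass, does not establish the stated $n$-independent bound.
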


\begin{proof}
	As in \eqref{eq:cea}, the error can be split into standard finite element error and the reduction error 
	\begin{equation}
 \label{eq:bestapp2}
		\norm[1,\Omega]{\phi-\widetilde{u}} \leq C \norm[1,\Omega]{\phi-\widetilde{v}} \leq C(\norm[1,\Omega]{\phi- u}+\norm[1,\Omega]{u-\widetilde v}) \quad \forall \widetilde{v} \in \widetilde{V}.
	\end{equation}
	Note that the standard finite element error is proportional to $h\norm[0,\Omega]{f}$ \cite{brenner_mathematical_2008}.

    The finite element solution $u$ can be written as
\[
u = \sum_{i=1}^n \Sh_i(u|_{\omega_i}).
\]
Therefore, choosing $\widetilde{v} \in \widetilde{V}$ in the best approximation
result \eqref{eq:bestapp2} as
\[
\widetilde{v} = \sum_{i=1}^n \Sh_i(\widetilde{v}_i),
\]
where $\widetilde{v}_i$ is
given by the expression \eqref{eq:vi}, leads to
\begin{equation}
\label{eq:errorbound}
\begin{aligned}
	\| u - \widetilde{v} \|_{1,\Omega} & \leq \sum_{i=1}^n \| \Sh_i \| \| u|_{\omega_i} - \widetilde{v}_i \|_{1,\omega_i} \\
							  & \leq \sum_{i=1}^n C(1 + h^{-1}) \| u - \widetilde{v}_i \|_{1,\omega_i} \\
							  & \leq \sum_{i=1}^n C(1 + h^{-1}) \epsilon \norm[1,\omega_i^+]{u} \\
							  & \leq C(1 + h^{-1}) \epsilon m\norm[1,\Omega]{u}.
\end{aligned}
\end{equation}
Combining the above with \eqref{eq:bestapp2} concludes the proof.
\end{proof}

\Cref{thm:globalbound} provides a  global error bound for our
method. The error is a combination of the conventional finite element error and the
error introduced by the
projection onto a smaller solution space which is proportional to the parameter $\epsilon$. Further, the more challenging $h^{-1}$
dependency
is due to our stitching operator $\Sh_i$ that luckily manifests only in few pathological cases, and even in those mostly locally. This dependency could be circumvented, e.g., by using
the Lagrange multiplier method or Nitsche's method for combining the local solutions\red{, but it will be present via the derivative of the cut-off function also when using the standard partition of unity approach}.
In practice, the
reduction error is negligible even for very relaxed parameterization, primarily
due to the pessimistic bound \eqref{eq:localbound}. The method can be easily
tuned such that the finite element error is the limiting term, see the numerical results in \cref{sec:numtests}.

\section{Parallel implementation}

\label{sec:implementation}

In this section, we propose an outline to implement our methodology in a setting
with one master node and a host of worker nodes. Given a mesh $\Th$, number of
subdomains $n$, tolerance $\epsilon$, and optionally a rule to define the
subdomain extension parameter $r$, the program outputs a solution to
\cref{prob:reduced} with smaller than $\epsilon$ local relative error compared to
\cref{prob:weak} as per \cref{lem:localbound}.
The full error to the theoretical solution of \cref{prob:weak} is bounded by \cref{thm:globalbound}.

The computational
framework follows a three-step structure: 
\begin{enumerate}
	\item preprocessing the data into extended submeshes;
	\item computing the optimal local bases; and
	\item transforming the global system and solving the smaller problem. 
\end{enumerate}
Steps (1) and (3) can be computed on the master node while step (2) can be
completely parallelized in a distributed setting without node communication; see \cref{fig:cloud}.
For testing purposes, step (2) may be run sequentially on a single computer.
Code corresponding to the framework using
\texttt{scikit-fem} \cite{gustafsson_scikit-fem_2020} can be found
from \cite{sourcepackage}. This section omits some optimizations included in the code for clarity. In
\cref{sec:numtests}, the scaling tests suggest approximate
hardware requirements for problems of different sizes.
We will now address steps (1)-(3) in order, but focus primarily on step (3) because the
first two are direct applications of \cref{sec:dd} and
\ref{sec:lowrank}.

\subsection*{Step (1)}

Beginning with preprocessing, the connectivity between the
mesh nodes is interpreted as a graph and the nodes are split into $n$
overlapping subdomains using a graph partitioning algorithm~\cite{karypis_metis_1997, pellegrini_scotch_1996} according to \cref{fig:decomposition}.
The submeshes $\Th_{i}^+$, which include information of both the subdomains~
$\omega_i$ and the extended subdomains $\omega_{i}^+$, and their neighboring
relations are saved to files. This
concludes step~(1).

\begin{figure}
\centering
    \includegraphics[width=0.8\textwidth]{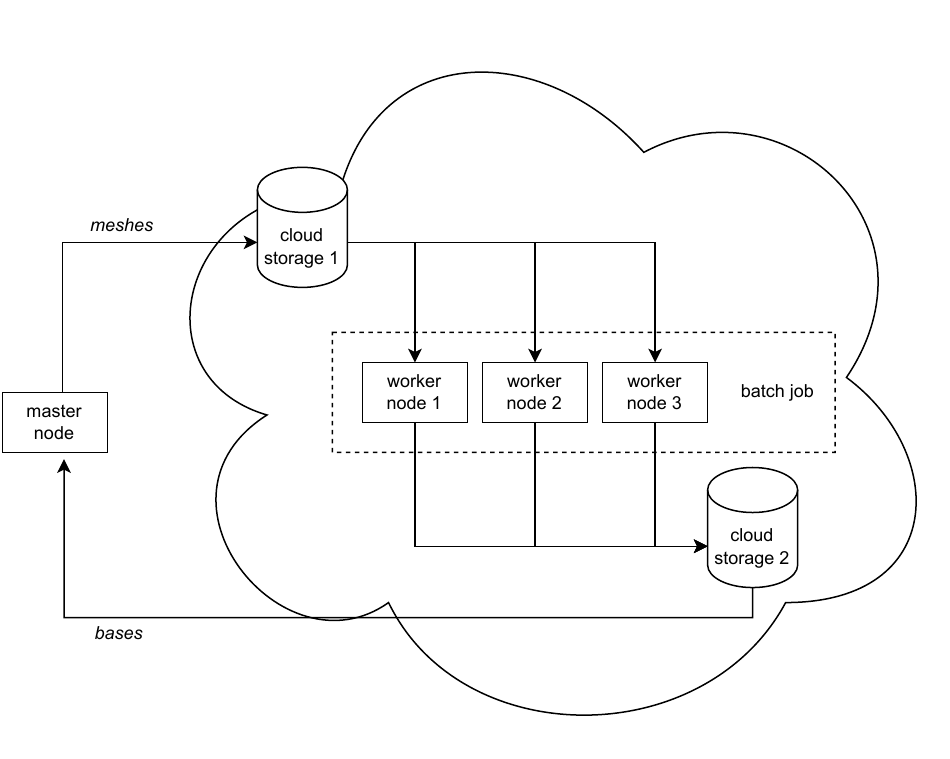}
    \caption{A high level overview of the data flow in our distributed implementation of step (2).
    Decomposing the mesh
    into overlapping subdomains
    is done by the master node in step (1).
    The overlapping meshes are then uploaded
    to a cloud storage bucket
    where the worker nodes 
    are able to access the files.  The worker nodes
    are virtual machines launched
    as a batch job.
    The figure depicts an example with three worker nodes/subdomains.
    The worker nodes write their output
    data to a separate cloud storage
    bucket which can be accessed by
    the master node in order to proceed with
    step~(3).}
    \label{fig:cloud}
\end{figure}

\subsection*{Step (2)}

The files are sent to worker nodes for computing the reduced local
bases. Before going into computations, it is worth to describe briefly how the
local bases are used to transform the system in the end. Let $\bm Q_i$ denote
the reduced local basis corresponding to $\omega_i$ and $\bm A$ the global stiffness matrix
corresponding to \cref{eq:dweak}. 
\red{In step (3), we would like to project the original system for}
\begin{equation}
	\begin{aligned}
	\label{eq:qtaq}
	\bm Q^T \bm A\bm Q & = \mat{\bm Q_1, , \bm 0; , \ddots, ; \bm 0, , \bm Q_n}^T \mat{\bm A_{11}, \cdots, \bm A_{1n}; \vdots, \ddots, \vdots; \bm A_{n1}, \cdots, \bm A_{nn}} \mat{\bm Q_1, , \bm 0; , \ddots, ; \bm 0, , \bm Q_n} \\ 
					   & = \mat{\bm Q_1^T\bm A_{11}\bm Q_1, \cdots, \bm Q_1^T\bm A_{1n}\bm Q_n; \vdots, \ddots, \vdots; \bm Q_n^T\bm A_{n1}\bm Q_1, \cdots, \bm Q_n^T\bm A_{nn}\bm Q_n}.
	\end{aligned}
\end{equation}
\red{Hence, in step (2), we compute $\bm Q_i$ and the product $\bm Q_i^T \bm A_{ii}\bm Q_i$ and send those to the master node.}

\red{We proceed first by assembling a local system from the local mesh
corresponding to $\omega_{i}^+$.} Then, creating the optimal reduced basis for
$\omega_i$ proceeds again in two substeps. Firstly, \cref{prob:aux1} is
solved and restricted to $\omega_i$ to obtain the first basis function.
Secondly, the rest of the basis is obtained by computing the matrix-version of
the lifting operator $\Zh_i$ as
\begin{align*}
	\bm Z_i = \bm C_i (\bm A_{II,i}^+)^{-1}\bm A_{IB, i}^+.
\end{align*} 
Next, the weight matrices $\bm R_i$ and $\bm
R_{BB,i}^+$ are solved following \cref{sec:lowrank}, and the
low-rank approximation computed per \eqref{eq:svdtol}. 
The left singular vectors of the low-rank approximation can be denoted $\bm U_{i,\epsilon}$.
Then, the energy norm
weights are removed 
and the range restricted, $\bm Q_i = \bm D_i \bm R_i^{-1}\bm
U_{i,\epsilon}$, where $\bm D_i$ is another restriction matrix from the degrees-of-freedom corresponding to $\omega_i$ to the
degrees-of-freedom correponding to the interior of $\omega_i$. The restriction matrix $\bm D_i$ acts as the stitching operator in \cref{def:stitching}.
The range now corresponds to the correct solution space. 
To improve accuracy and to reduce memory usage and transfer, a generalized eigenvalue problem is still solved for a slightly updated basis. This optimization diagonalizes the diagonal blocks of \eqref{eq:qtaq}. The details can be inferred from our source code \cite{sourcepackage}.

Step (2) is finished by sending the relevant information back to the master
node. This includes the bases $\bm Q_i$, the projected local problems and
some indexing information of $\omega_i$ and $\partial\omega_{i}^+$.
This data is sufficient for transforming the global system. 

\subsection*{Step (3)}

The challenge in step (3) is in the transformation \eqref{eq:qtaq} for
extremely large $\dim V$. While the global stiffness matrix $\bm A$ is very
sparse, the number of degrees-of-freedom makes direct computation of $\bm Q^T \bm A \bm Q$ infeasible. Fortunately, the matrix product \eqref{eq:qtaq}
can be constructed implicitly, assembling neither $\bm A$ nor
$\bm Q$. In particular, the diagonal submatrices were already computed in step (2).
Further, the vast majority of the off-diagonal submatrices $\bm Q_i^T \bm
A_{ij}\bm Q_j$, which correspond to the nodes in the interiors of
$\omega_i$ and $\omega_j$, $i\neq j$, are predictably zero.

The nonzero off-diagonals can be constructed as follows. Let the set of elements that are included in at least two subdomains be denoted
by $\omega_0 = \cup_{i=1}^n \cup_{j=1}^n (\omega_i\cap\omega_j)$. The nonzero
values correspond to the interior degrees-of-freedom of $\omega_i$ and
$\omega_j$ that influence the same interfacing element in
$\omega_0$, and these values are dependent on \textit{the interfacing
elements only}. Note that each subdomain is neighboring only a
minority of the other subdomains. For non-neighboring subdomains, the
matrix $\bm Q_i^T \bm A_{ij} \bm Q_j$ is zero as $\bm A_{ij}$ is a zero
matrix. For neighboring subdomains, the intersection $\omega_i\cap\omega_j$ is
still very small, which means that $\bm A_{ij}$ is very sparse. Moreover, the
nonzero values are known in advance to correspond to the degrees-of-freedom which
are included in $(\omega_i\cap\omega_j)\setminus\partial\Omega$.

Hence, the off-diagonal submatrices can be also computed in
parallel with respect to a stiffness matrix $\bm A_{0}$ assembled over the set of interfacing elements $\omega_0$ and indexed on the
respective boundary degrees-of-freedom. This is a much cheaper procedure: $\bm A_{0}$ is
\textit{extremely} sparse and hence fits into memory easily \red{even for extremely
large problems}. Moreover, it
becomes evident that we do not need the full $\bm Q_i$ from step
(2) but only the matrix rows corresponding to the boundary degrees-of-freedom. 
The off-diagonal products are then with respect to a small subset of the
degrees-of-freedom and almost all of the products are predictably zero due to
the existence of non-neighboring subdomains.

Projecting the stiffness matrix onto the optimal basis can be now done in parts, by iterating over the pairs of neighboring subdomains in parallel. The projection of the system is then possible to compute on a laptop even for large problems. Without this optimization, scaling the method requires prohibitive amounts of memory. The projection of the right-hand side was
already done in step (2). The reduced global system is then fast to solve using an iterative
method such as the preconditioned conjugate gradient method and a diagonal preconditioner.

\subsection*{Randomized numerical linear algebra}

The whole procedure can be managed on a laptop master node for considerable problem sizes
if the bulk of the work is distributed to worker nodes in step
(2). Still, constructing $\bm Z_i$ and its low-rank approximation scale poorly in step (2). Luckily, both can be optimized as follows.

The tool we use is \emph{sketching} from randomized numerical linear algebra
\cite{martinsson_randomized_2020}. In a nutshell, the \cite[Algorithm
4.1]{halko_finding_2011} samples a smaller standard Gaussian random matrix $\bm
S_i\in\R^{M_i\times k_i}, k_i < M_i$, mapping it with the weighted $\bm Z_i$,
and performing a $\bm{QR}$ factorization on the smaller matrix product
\begin{align*}
	\label{eq:sketcheq}
	\hat{\bm Q_i} \hat{\bm R_i} = \bm R_i \bm C_i \bm A_{II,i}^{-1} \bm A_{IB,i} \bm R_{BB,i}^{-1}\bm S_i.
\end{align*}
The resulting $\hat{\bm Q_i}$ spans a subspace of the image of the weighted $\bm Z_i$
determined by random boundary conditions. This is similar to \cite{buhr_randomized_2018}, but we have added the required inner product matrices $\bm R_i$ and $\bm R_{BB,i}^{-1}$. The expected and tail errors from using this approach are detailed in \cref{error:lowrank} and we discuss how to choose the sketching parameter at the end of \cref{sec:scalingtests}.

However, compared to doing the SVD of the weighted $\bm Z_i$ directly to find the smallest local approximation space for $\epsilon$, the local spaces from sketching can be much larger than desired due to oversampling. We noticed that even iterative sketching procedures often generated local approximation spaces twice the size of an explicit SVD. The optimal local space is a subspace of $\hat{\bm Q_i}$ with a very high probability, so the SVD could be done the for sketched matrix. However, this produces poor results due to scaling singular values with the random boundary conditions. 

We propose an efficient approach to find the optimal local approximation space by projecting the image of $\bm Z_i$ 
onto $\hat{\bm Q_i}$, and then doing an SVD for the matrix product
\begin{equation}
\label{eq:sketchsvd}
\hat{\bm Q}^T_i\bm R_i \bm C_i \bm A_{II,i}^{-1} \bm A_{IB,i} \bm R_{BB,i}^{-1}.
\end{equation}
Taking the
singular vectors that correspond to singular values greater than tolerance for
$\hat{\bm U}_{i,\epsilon}$, the updated basis is given by $\bm Q_i = \bm D_i\bm R_i^{-1} \hat{\bm Q_i} \hat{\bm U}_{i,\epsilon}$.
This is known as the randomized SVD \cite{martinsson_randomized_2020}. The problem is that computing \eqref{eq:sketchsvd} from right-to-left now requires solving $M_i$ large systems of equations in $A_{II,i}^{-1}A_{IB,i}$ compared to $k_i$ in sketching. We resolve this issue by noting that
by transposing the matrix product \eqref{eq:sketchsvd} twice, it can be computed with two left solves as
\begin{align*}
	\hat{\bm Q}^T_i\bm R_i \bm C_i \bm A_{II,i}^{-1} \bm A_{IB,i} \bm R_{BB,i}^{-1} = (\bm R_{BB,i}^{-T} ((\bm A_{II,i}^{-T}(\hat{\bm Q}^T_i \bm R_i \bm C_i)^T)^T \bm A_{IB,i})^T)^T,
\end{align*}
so that the linear solves are with respect to $k_i$ instead of $M_i$ right-hand sides. This retains the asymptotics of plain sketching and approximately only doubles the hidden coefficient.

Our 
approach produces \textit{practically the same optimal bases as explicitly constructing
weighted $\bm Z_i$ and doing an SVD, with a very high probability}, while being a magnitude faster than the explicit variant for relevant problem sizes. In contrast, creating the reduced basis by sketching only introduces plenty of redundant dimensions to the projection. Achieving the smallest reduced bases is important for very large problems to fit the reduced global problem into memory. Our optimization enjoyes the benefits of both speed and small dimensionality.

Further, we repeat that the bound in \cref{lem:localbound} is very crude, and hence the local error tends to be much smaller than the tolerance $\epsilon$. This is numerically supported in \cref{sec:scalingtests}. As a result, we experienced the explicit and randomized variants in step (2) to produce practically identical global errors in our simulations.

\section{Numerical experiments}

\label{sec:numtests}

We provide three kinds of numerical results to support the theoretical
discussion. First, we introduce scaling tests on a unit cube
with evenly spaced tetrahedral elements and the canonical hat basis,
with the model order reduction implemented in a cloud environment. The numerical error and the use of computational
resources are
analyzed in detail. Secondly, we solve an example of \eqref{modelproblem} with
a nonconstant parameter $a=a(x)$.  Thirdly, we showcase the
methodology in the case of a "$2.5$-dimensional" engineering geometry, which provides
a more realistic application with low-dimensional partition interfaces.

\subsection{Scaling tests}
\label{sec:scalingtests}
Cube is one of the most
challenging objects geometrically for the method as decomposing it into
subdomains produces large interfaces and hence larger optimal reduced bases.
The scaling test included $7$ cases of increasing size up to \numprint{86350888} degrees-of-freedom. 
\Cref{prob:weak} with load
$$f = 2\sqrt{900}((1-x)x(1-y)y + (1-x)x(1-z)z + (1-y)y(1-z)z)$$ on $\Omega = [0,1]^3$
was approximated by solving
\cref{prob:dweak} using the domain decomposition method. Such a loading was chosen
because the energy norm of the corresponding analytical solution $\phi$ is equal to $1$.
Then by the Galerkin orthogonality, the error can be
computed as
\begin{align*}
	\norm[0,\Omega]{\nabla(\phi-\widetilde u)} & = (\norm[0,\Omega]{\nabla \phi}^2 - \norm[0,\Omega]{\nabla \widetilde
	u}^2)^{1/2} = (1 - \bm x^T \bm Q^T \bm A \bm Q \bm x)^{1/2},
\end{align*}
where $\bm x$ is the reduced solution vector.

The tolerance was set to $\epsilon=\expnumber{1}{-2}$
which now corresponds to the maximum relative error of 1\,\% 
\red{for each subdomain.}
This is enough even for the largest cases, evident in \cref{tbl:scaling} and \cref{fig:errors}, due to the looseness of the bound in \cref{lem:localbound}.
The number of subdomains was chosen such that the
number of degrees-of-freedom on each subdomain would be
in few thousands. The mesh partitions were produced by METIS
\cite{karypis_metis_1997} up to case 8 and with a custom regular cube partitioning algorithm
afterwards; the bottleneck on the master node was ultimately the memory
usage of the mesh preprocessing.
The extension \red{parameter $r$ was defined via mesh node hops, hereon called
	$r$-hops, including into the extension all nodes within $r$ edges from the
subdomain. The} hops were specified to approximately
double the diameter of the subdomains. In a three-dimensional setting, this
roughly eightfolds the subdomain volume or the number of degrees-of-freedom in a
computational setting. 

The local reduced bases were computed in Google Cloud using the last sketching
procedure detailed at the end of \cref{sec:implementation} with 
$k_i=\lfloor M_i/8 \rfloor$. This choice is justified by the spectra of the local operators, discussed at \cref{app:spectra}. 
The cloud worker nodes were employed with \texttt{c2-standard-4} machine type
and \texttt{debian-11-bullseye-v20230411} image at spot prices for lower costs.
Up to case 8, $n/5$ virtual machines 
were reserved 
so that each instance computed five local problems. From case~9 and
onwards 1000 instances were reserved. Solving a local problem took in general under a minute of wall clock time,
but due to waiting for resources at spot prices the mean times could increase
to three minutes per problem across the largest jobs.

\begin{table}[H]
	\caption{Computational environment for the scaling tests.}
\label{tbl:compenv}
\centering
\begin{tabular}{l l l c r}
\toprule
Node & OS & CPU & Threads & RAM \\
\midrule
Master & Ubuntu 22.04 LTS & Intel Core i5-1335U & 12 & 32GB \\
Worker & Debian Bookworm 12.5 & Intel Xeon Gold 6254 & 4 & 4GB  \\
\bottomrule
\end{tabular}
\end{table}

The local reduced bases were transferred back to the master node, where 
the global stiffness matrix was transformed using 
optimizations introduced in \cref{sec:implementation}. 
The reduced linear system of equations was finally solved using the conjugate gradient
method with a diagonal preconditioner. Even for case 11, the postprocessing and
the solve took roughly four hours, where the computational time 
concentrated almost exclusively on the system transformation. The master and worker node specifications are detailed in \cref{tbl:compenv}.

The results can be found from \cref{tbl:scaling} and \cref{fig:errors}. The error from the
theoretical solution decreases with $h$ as expected. The reduction error, i.e.~the error between the conventional FE solution and our domain decomposition
method, stays relatively constant at double the local error bound. The Google
Cloud cost in dollars is mainly from computational resources with some
additional costs from data transfer and storage.
\begin{table}[H]
	\caption{Scaling tests with $\epsilon=\expnumber{1}{-2}$. The reduction error has not been computed for cases 8 and above due to the large sizes of the original systems.}
\label{tbl:scaling}
\centering
\begin{tabular}{c r c c c c c c l}
\toprule
Case & $\dim(V)$ & $n$ & $r$-hop & $h$ & error & red. error & cost\\
\midrule
4 & \numprint{4233} & \numprint{6} & 4  & $\expnumber{1.5}{-1}$ &  $\expnumber{1.6}{-1}$ &  $\expnumber{9.0}{-5}$  & $\leq$\$0.1 \\
5 & \numprint{30481} & \numprint{30} & 5 & $\expnumber{9.9}{-2}$ & $\expnumber{8.0}{-2}$  &  $\expnumber{2.1}{-4}$ & $\leq$\$0.1 \\
6 & \numprint{230945} & \numprint{100} & 6 & $\expnumber{4.9}{-2}$ & $\expnumber{4.1}{-2}$  &  $\expnumber{2.6}{-4}$ & \$0.16  \\
7 & \numprint{1000000} & \numprint{400} & 6 & $\expnumber{1.7}{-2}$ & $\expnumber{2.1}{-2}$  &  $\expnumber{2.0}{-4}$ & \$0.73  \\
8 & \numprint{10648000} & \numprint{4000} & 6 & $\expnumber{7.9}{-3}$ & $\expnumber{9.5}{-3}$  &  - & \$4.36 \\ 
9 & \numprint{21952000} & \numprint{8000} & 6 & $\expnumber{6.2}{-3}$ & $\expnumber{7.5}{-3}$  &  -  & \$8.64 \\
10 & \numprint{42875000} & \numprint{15625} & 6 & $\expnumber{4.9}{-3}$ & $\expnumber{6.0}{-3}$  &  -  & \$15.75 \\
11 & \numprint{86350888} & \numprint{39304} & 6 & $\expnumber{3.9}{-3}$ & $\expnumber{4.7}{-3}$  &  -  & \$39.85 \\
\bottomrule
\end{tabular}
\end{table}

\begin{figure}
	\begin{center}
		\includegraphics[width=0.85\textwidth]{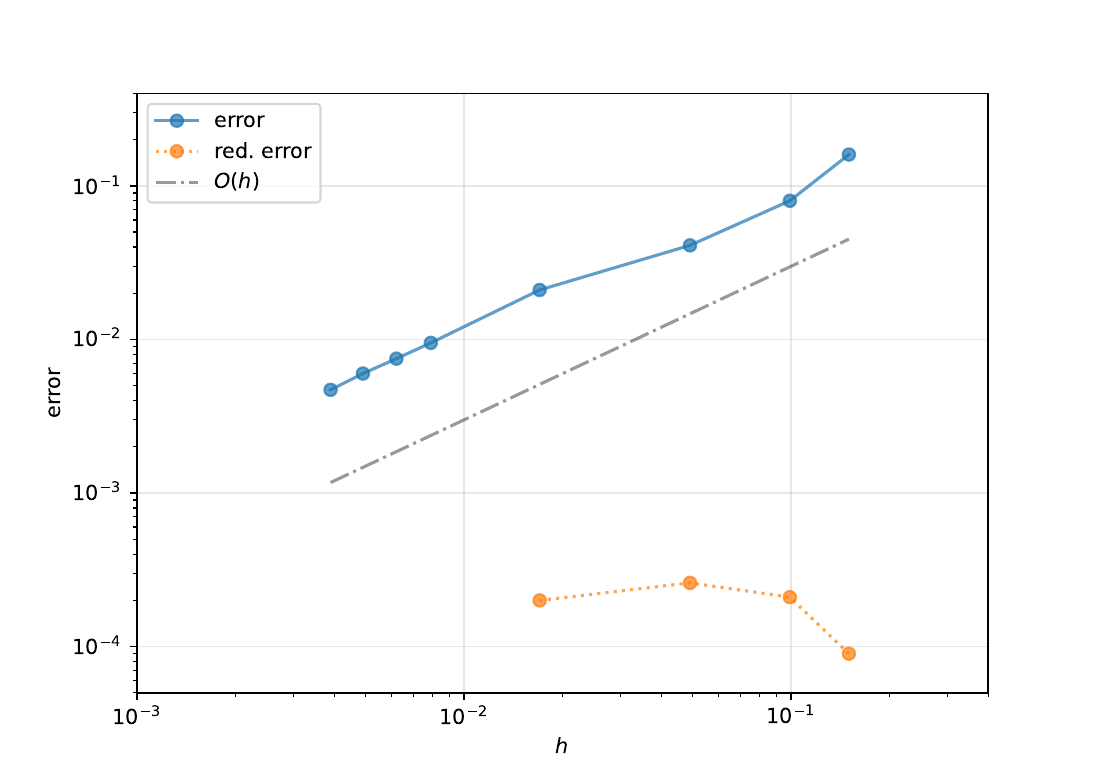}
	\end{center}
	\caption{Convergence of the method on a log-log scale. The $x$-axis depicts the mesh parameter $h$, and the $y$-axis shows the errors $\norm[0, \Omega]{\nabla (\phi - \tilde u)}$, the reduction errors $\norm[0,\Omega]{\nabla (u - \tilde u)}$ and the theoretical FEM convergence rate. The reduced method follows the conventional rate since the reduction error stays well below the FEM error. The reduction errors were not computed for the largest cases due to limitations in scaling the conventional FEM.}\label{fig:errors}
\end{figure}


\begin{figure}
	\centering
	\begin{subfigure}[b]{0.49\textwidth}
		\centering
		\includegraphics[width=\textwidth]{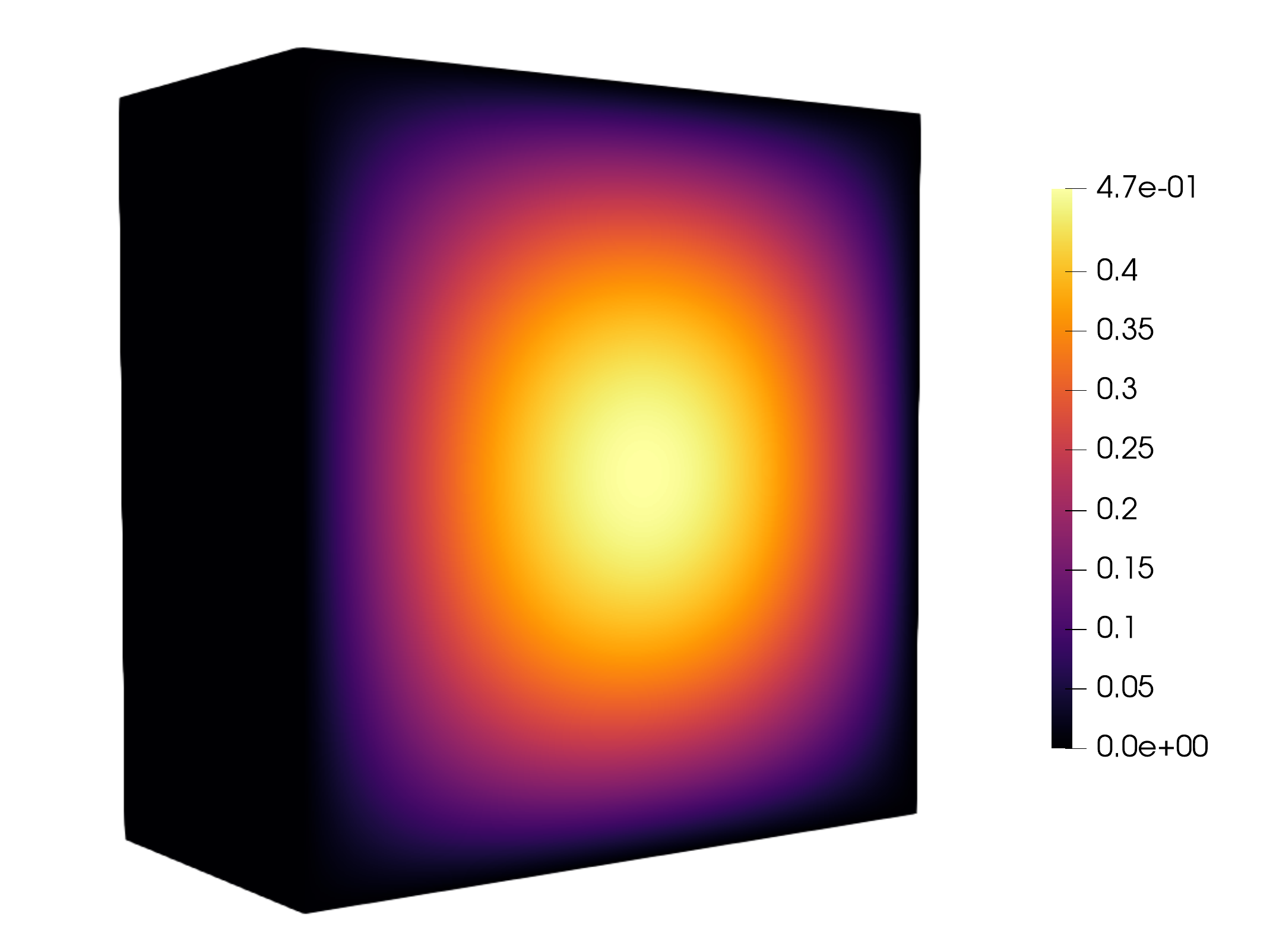}
		\caption[]%
		{{\small Case 7 reduced solution}}
	\end{subfigure}
	\hfill
	\begin{subfigure}[b]{0.49\textwidth}  
		\centering 
		\includegraphics[width=\textwidth]{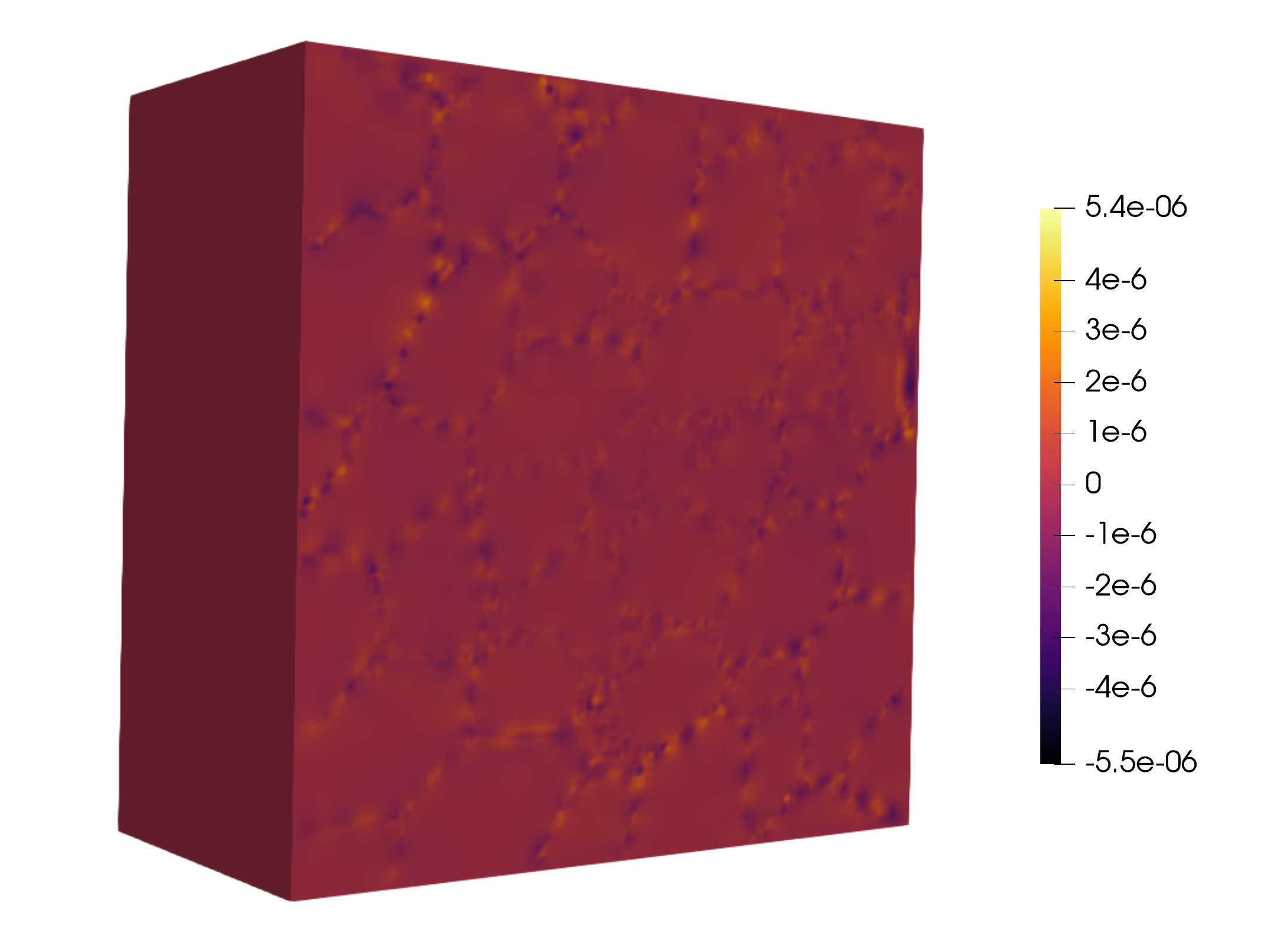}
		\caption[]%
		{{\small Case 7 reduction error $u-\widetilde u$}}
	\end{subfigure}
	\caption[ Case 7 error ]
	{\small Case 7 reduced solution and error. The error is negligible and concentrates on the subdomain interfaces.}
	\label{fig:c7sol}
\end{figure}

\Cref{fig:c7sol} displays the reduced solution and its error compared to the FE solution for case 7. The approximation is practically exact inside the subdomains, but some error persists on the subdomain interfaces. 

\Cref{tbl:nonzeros} presents how the global system size and conditioning is
improved for cases in \cref{tbl:scaling}, although with small increase in the
number of nonzero values. The reductions are significant with the reduced bases having around 
$20-30$ times fewer degrees-of-freedom than the original bases. The condition numbers are also about
half of the original system value. Unfortunately, the relative fill-ins of the
reduced stiffness matrices are much larger because the support of the
reduced bases are larger then the support of the original piecewise
linear finite element bases, and the nonzero off-diagonal
submatrices are dense. The size of the dense blocks can be reduced by further increasing the number
of extension hops. 
However, this cost is negligible compared to the fact that it is infeasible to assemble the original stiffness matrix altogether with the same memory.

\begin{table}[H]
	\caption{Dimensions, number of nonzeroes and condition numbers of the original and reduced stiffness matrices for the scaling tests in \cref{tbl:scaling}. The cases 8 and below used METIS in the graph partitioning.  The cases 9 and above used a custom block partitioning algorithm which splits the cube into smaller cubes of equal size. The regular shape of the cubical subdomains and their extensions explains the drop in the number of nonzeros from case 8 to case 9.  The number of nonzeros and the condition numbers have not been computed for the cases 8 and above due to the high computational efforts involved.}
\label{tbl:nonzeros}
\centering
\begin{tabular}{c r r r r r r r }
\toprule
Case & $\dim(V)$ & $\dim(\widetilde V)$ & nnz($\bm A$) & nnz($\widetilde{\bm A}$) & $\kappa(\bm A)$ & $\kappa(\widetilde{\bm A})$ \\
\midrule
4 & \numprint{4233} & \numprint{178} &\numprint{56195} & \numprint{21888} & 34 & 50 \\
5 & \numprint{30481} & \numprint{1436} &\numprint{424627} & \numprint{682710} & 445 & 211 \\
6 & \numprint{230945} & \numprint{6880} &\numprint{3289443} & \numprint{5800292} & 2161 & 816 \\
7 & \numprint{1000000} & \numprint{35041} &\numprint{6940000} & \numprint{42801805} & 3971 & 2272 \\
8 & \numprint{10648000} & \numprint{425194} & - & \numprint{659025526} & - & - \\ 
9 & \numprint{21952000} & \numprint{895208} & - & \numprint{600208952} & - & - \\
10 & \numprint{42875000} & \numprint{1782383} & - & \numprint{1219492727} & - & - \\
11 & \numprint{86350888} & \numprint{4019216} & - & \numprint{2466745712} & - & - \\
\bottomrule
\end{tabular}
\end{table}


\subsection{Subproblem scaling}

We claimed that the method is difficult to scale without optimizing the computation of $\bm R_{BB,i}$, the construction of the optimal reduced basis $\bm Q_i$ and the projection of the system to the reduced basis $\bm Q^T \bm A\bm Q$. Our optimization for the first problem was given in \cref{sec:lowrank} while optimizations for the latter two were delineated in \cref{sec:implementation}. We now present numerical evidence of the subproblem scaling focusing first on the two subproblems that are local and then examining the global transformation of the system to the reduced basis. All computations were completed on the master node in \cref{tbl:compenv}.

\Cref{fig:subproblemscaling} compares the scaling of $\bm R_{BB,i}$ and $\bm Q_i$ by first computing the desired subproblems with a straightforward approach and secondly with our optimizations. For $\bm R_{BB,i}$, explicitly computing the Schur complement tends to scale quadratically in wall clock time compared to sub-quadratic scaling of the Cholesky decomposition for the extended local problem. 
The effect becomes more pronounced when we compute the smallest possible basis $\bm Q_i$ given a tolerance $\epsilon=\expnumber{1}{-2}$. Using a full SVD after explicitly constructing weighted $\bm Z_i$ scales superquadratically, while our optimized version of the randomized SVD retains the same sub-quadratic scaling as computing the $\bm R_{BB,i}$. Importantly, the reduction error is still kept well below FEM error while the dimensionality of $\bm Q_i$ remains consistently within approximately $5\%$ of the direct SVD. Already for medium-sized problems our optimizations are a magnitude faster while requiring roughly half the memory of straightforward approaches. The performance gap increases with problem size in both cases, and will effect the requirements and costs of worker nodes in \cref{tbl:compenv}.

Next, \cref{fig:qtaqscaling} presents how our projection approach improves upon the direct projection by full matrix multiplications $\bm Q^T\bm A\bm Q$.
Both approaches rely on sparse matrix multiplication with the same number of nonzeroes, but the difference is that the explicit method constructs both $\bm Q$ and $\bm A$, while our optimized version constructs neither explicitly and multiplies only nonzero indices. Expectedly, the approaches have the same asymptotics, but our optimized version has a magnitude faster computational times. Crucially, the direct approach runs out of memory using the master node in \cref{tbl:compenv} already when degrees-of-freedom are in the single millions, while the optimized approach can project systems closer to 100 million degrees-of-freedom. Without optimizing the projection to the reduced basis, scaling the method is indeed impossible in memory-constrainted environments.

\begin{figure}
	\centering
	\begin{subfigure}[b]{0.75\textwidth}
		\centering
		\includegraphics[width=\textwidth]{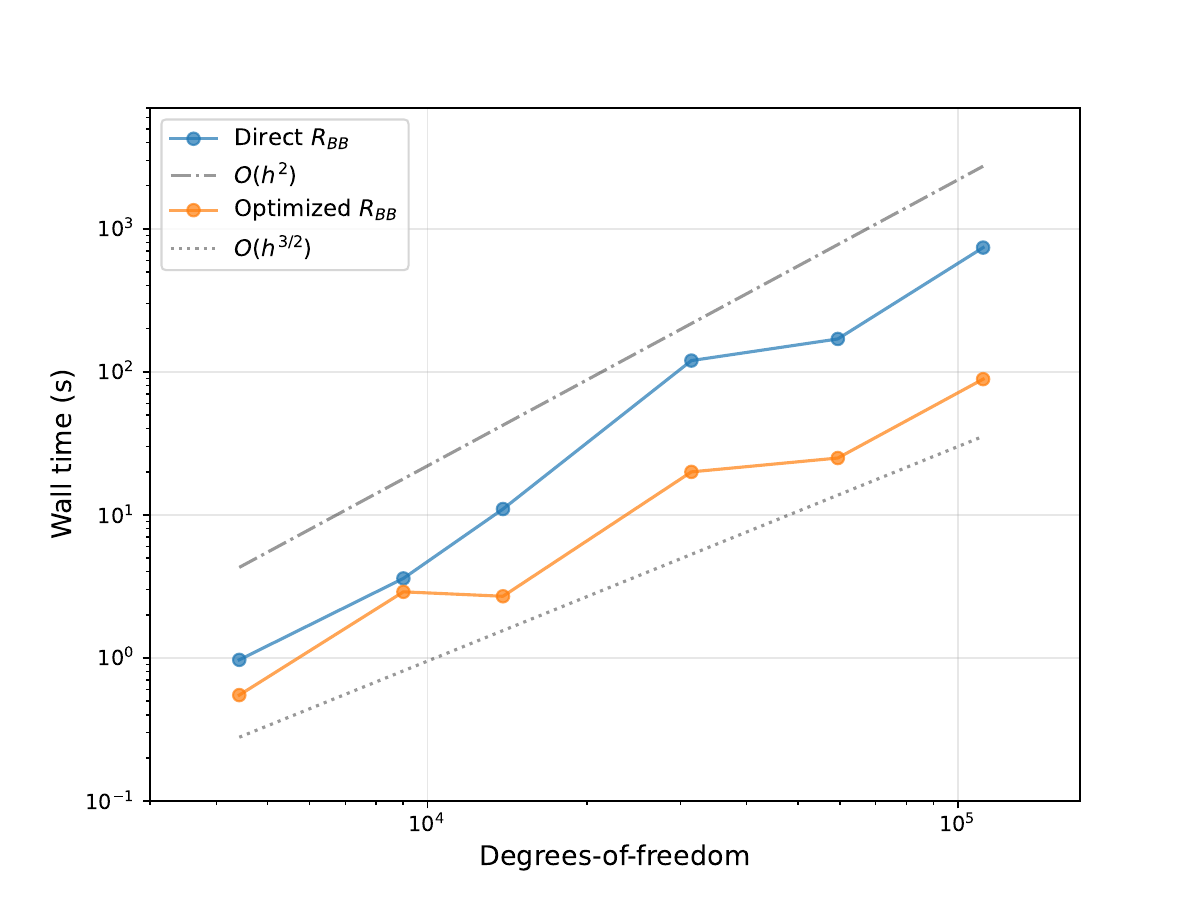}
		\caption[]%
		{{\small Wall clock time of computing $\bm R_{BB,i}$ in seconds.}}
	\end{subfigure}
	\hfill
	\begin{subfigure}[b]{0.75\textwidth}  
		\centering 
		\includegraphics[width=\textwidth]{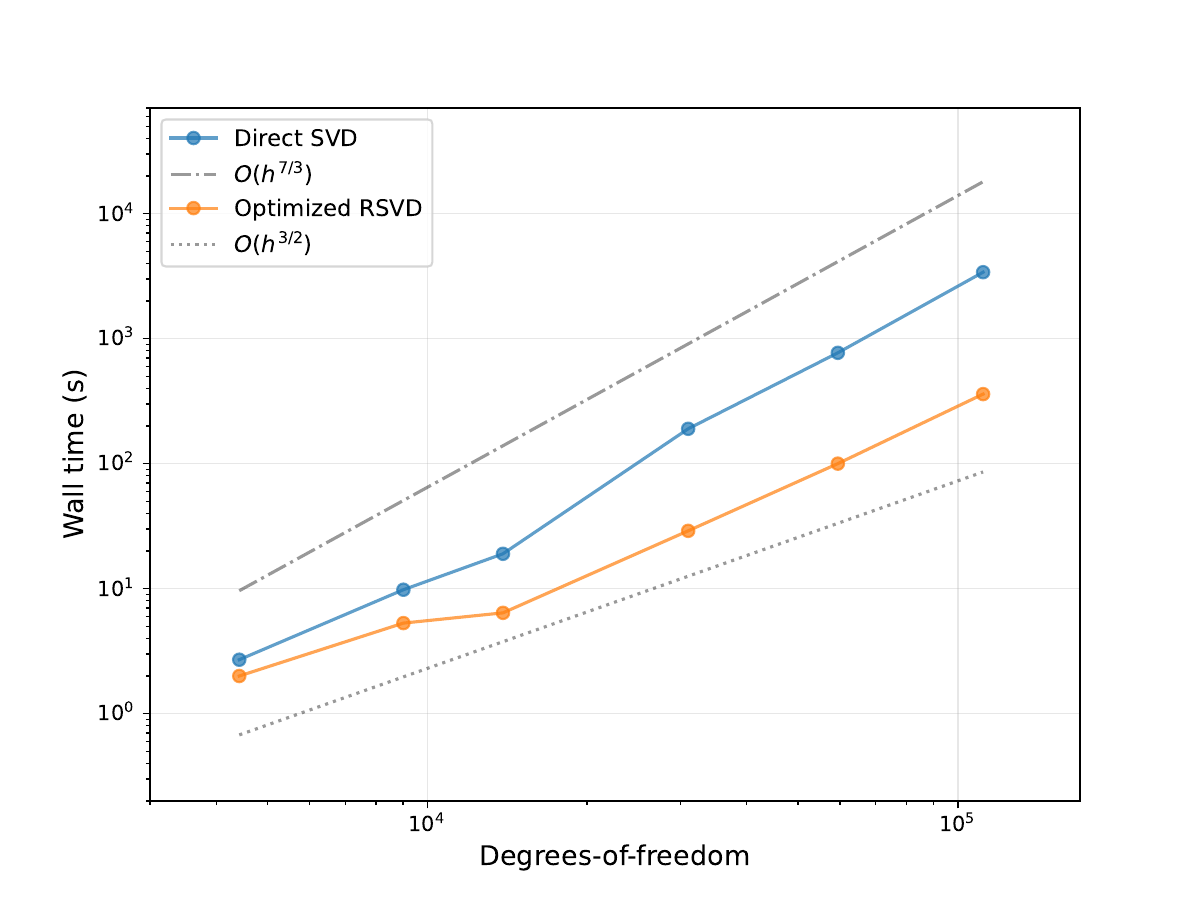}
		\caption[]%
		{{\small Wall clock time of computing $\bm Q_i$ in seconds.}}
	\end{subfigure}
	\caption[ Case 7 error ]
	{\small Scaling of local subproblems plotted on log-log scale with degrees-of-freedom on the $x$-axis and wall clock time in seconds on the $y$-axis. Our optimizations make computing both $\bm R_{BB,i}$ and $\bm Q_i$ sub-quadratic.}
	\label{fig:subproblemscaling}
\end{figure}

\begin{figure}
	\begin{center}
		\includegraphics[width=0.75\textwidth]{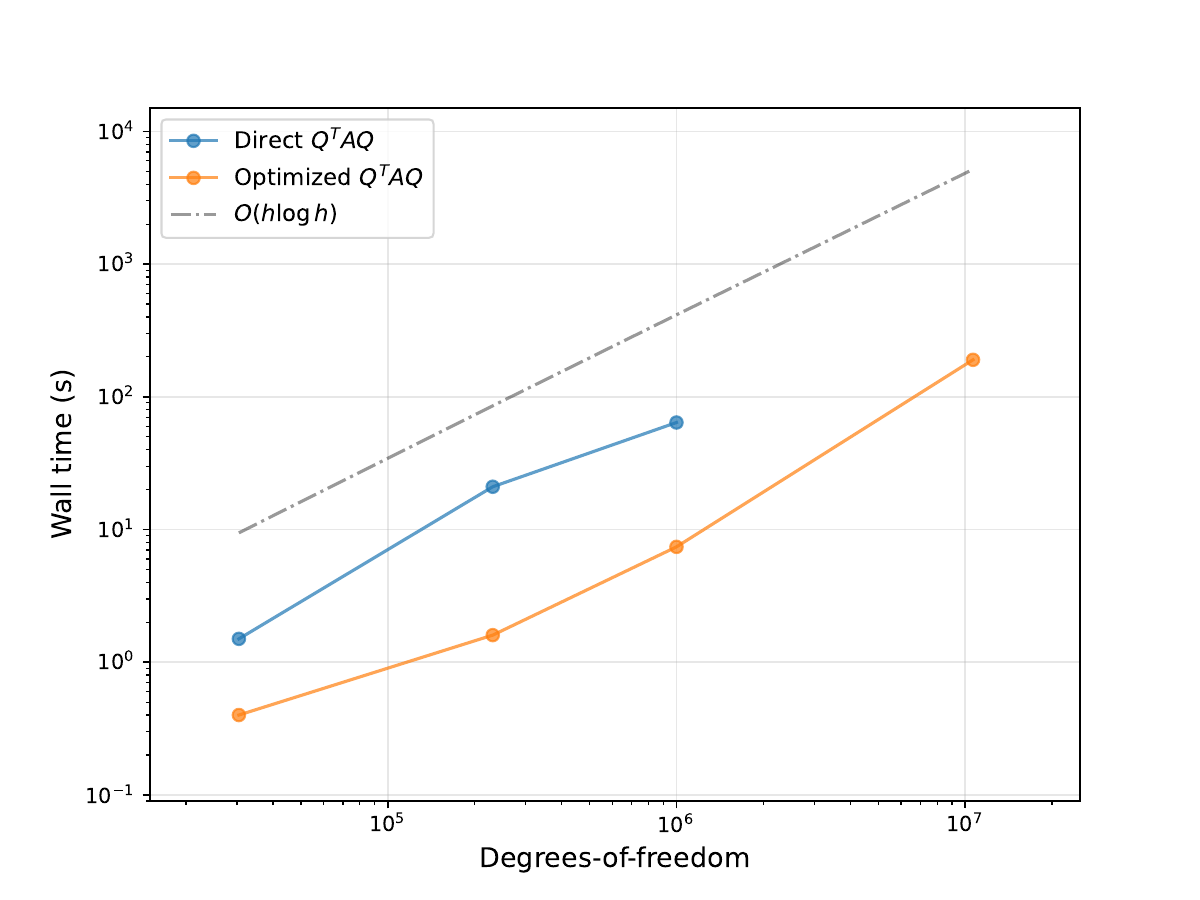}
	\end{center}
	\caption{Scaling of $\bm{Q^TAQ}$ in seconds of wall clock time on log-log scale. The straightforward transformation of the system to the reduced basis runs out of memory already in some millions of DOF, while our optimized approach is a magnitude faster and can project problems of almost 100 million DOF.}\label{fig:qtaqscaling}
\end{figure}

\subsection{Nonconstant $a$}
The error analysis in \cref{sec:analysis}
is done under the assumption that the material parameter in the governing equation \eqref{modelproblem} is $a=1$.
Thus, the analysis does not
answer to, e.g., how the different constants depend
on a spatially varying $a=a(x)$ and
whether the algorithm should be changed to accommodate this.
We will now present preliminary numerical results
using a space-dependent material field,
\[
a = a(x) = 10^k \sin(100 x) + 10^k + 1, \quad k=1,2,3,
\]
and leave more detailed error analysis for
future work. In the case of $k=3$, the ratio
between the largest and the smallest values of $a(x)$
is $2001 : 1$.

The finite element mesh of case 5 in \cref{tbl:scaling}
is suitable for resolving the oscillation
in $a(x)$.
Thus, the case 5 was computed repeatedly
for $k=1,2,3$ with the same $6$-hop extension and tolerance $\epsilon=\expnumber{1}{-2}$. The local bases were computed by explicitly constructing 
the weighted $\bm Z_i$ 
and their SVDs. The reduction errors were
$\expnumber{6.5}{-5}$, $\expnumber{2.0}{-5}$ and $\expnumber{6.3}{-6}$,
respectively. The $H^1$ norm of the solution is proportional to the reciprocal of $a$, 
which decreases the upper bound in \cref{thm:globalbound}.
This could explain the decrease in reduction error for higher $k$.
These preliminary results suggest that the proposed dimension reduction method is
not particularly sensitive to large variations in the material parameter.

\subsection{Engineering geometry}

Many engineering geometries are to some extent "2.5-dimensional"
in the sense that the interfaces between the subdomains
can be greatly reduced in size through the use
of fill-reducing graph partitionings.
Smaller interface sizes
are expected to further boost
the performance of the method, i.e.~lead
to smaller optimal reduced bases.
Thus, as a final example, we present results using a propeller geometry
from~\cite{ledoux_mambo-project_nodate}
and the loading
\[
f = f(x, y) = x^2 + y^2.
\]
Note that the axis $x=y=0$ corresponds to
the rotational axle of the propeller.

Using Gmsh~\cite{geuzaine_gmsh_2009}, we created a tetrahedral mesh with \numprint{850558}
nodes and \numprint{4394546} elements.
The geometry and the results of the basis
reduction are depicted in \cref{fig:propeller}.
The results show that for close to half of the 400 subdomains, the subdomains at the propeller blades which originally contain more than a thousand DOFs, are
reduced by two orders of magnitude to roughly ten or less DOFs
when using 6-hop extensions and the same tolerance
as in the scaling test, $\varepsilon=\expnumber{1}{-2}$, see the graph (c) in \cref{fig:propeller}.  Consequently, the system matrix
is reduced from $\numprint{850558} \times \numprint{850558}$ to $\numprint{10561} \times \numprint{10561}$ which corresponds to a reduction of $98.8\,\%$ of the DOFs. Note that the case 7
in the scaling tests results in a reduction
of $96.5\,\%$ from $\numprint{1000000}\times\numprint{1000000}$ to $\numprint{35041}\times\numprint{35041}$, so roughly three times less reduction.


\section{Conclusions and future work}

Model order reduction for finite element methods based on local approximation spaces have been the
focus of several papers in recent years, but its potential for distributed computing is yet to
be realized.
As a step forward, we have extended the numerical
analysis of the local approximation problems in terms of weighted $\ell_2$ norms and
derived straightforward error bounds. Consequently, the numerical error becomes simple to control 
using a single parameter, $\varepsilon > 0$, interpreted as the tolerance for the worst case reduction error in the $H^1$ norm. Furthermore, our work has clarified various
practicalities of implementing such schemes in a massively parallel computing environment.
As the computing nodes do not communicate during the model order reduction, the public cloud becomes 
a viable compute resource for the method.

The numerical experiments were implemented in the cloud, and included arbitrary mesh partitions and an approach for
transforming the global system efficiently using neighboring subdomain
information. The results displayed accuracy and conditioning properties
that can be expected from an implementation of
the piecewise linear finite element method. The reduction error concentrated on the
partition interfaces with near-perfect approximations inside the subdomains.
The high memory utilization of the graph partitioning algorithm seems to 
be the first serious bottleneck when scaling up the problem size.

Future work includes extending the analysis to more general partial differential equations and studying alternative approaches
for combining the reduced bases
over the subdomain interfaces.
The latter topic could both generalize the method to
higher order finite element methods and improve the partition-of-unity style error estimate to avoid the $h^{-1}$ scaling. 
The scheme could also be tested for even larger problems,
potentially utilizing accelerator hardware.

\begin{figure}[H]
	\centering
	\begin{subfigure}[b]{0.5\textwidth}  
		\centering 
		\includegraphics[width=\textwidth]{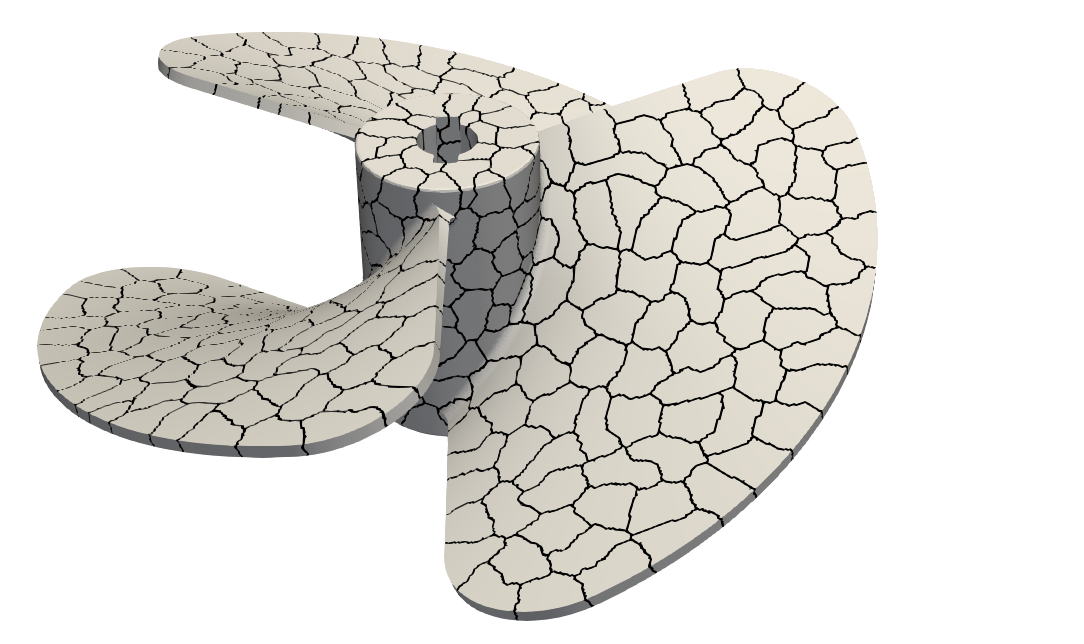}
		\caption[]%
		{{\small A visualization of $\omega_0$ and the subdomain boundaries.}}
	\end{subfigure}
	\hfill
	\begin{subfigure}[b]{0.5\textwidth}
		\centering
		\includegraphics[width=\textwidth]{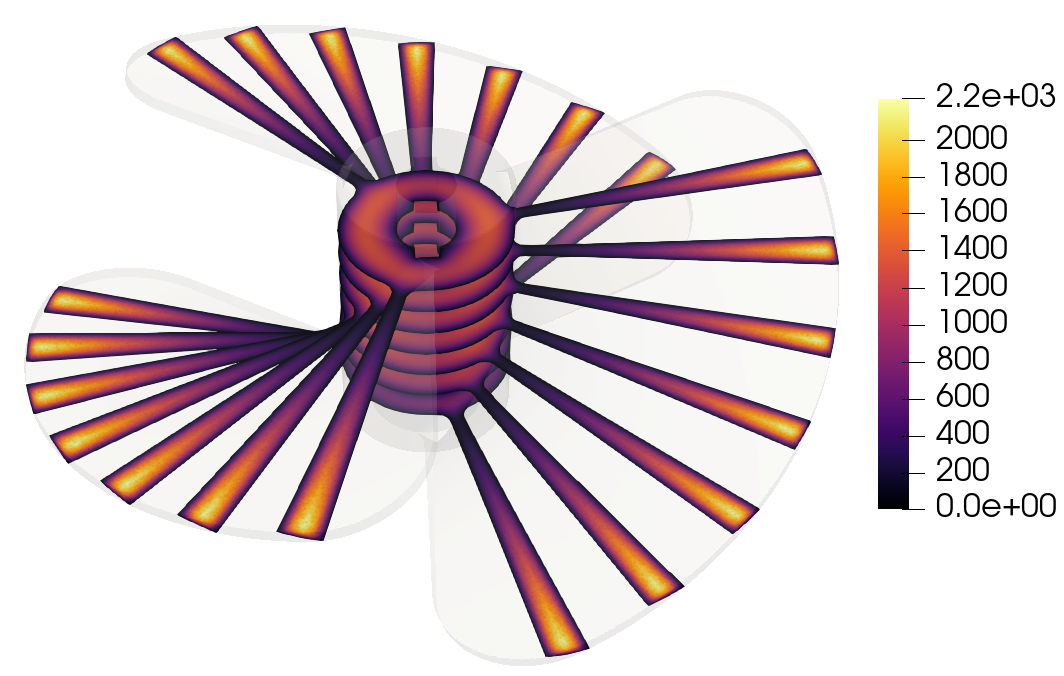}
		\caption[]%
		{{\small The reduced solution at a selection of propeller cross sections.}}
	\end{subfigure}
 	\begin{subfigure}[b]{0.8\textwidth}  
		\centering 
		\includegraphics[width=\textwidth]{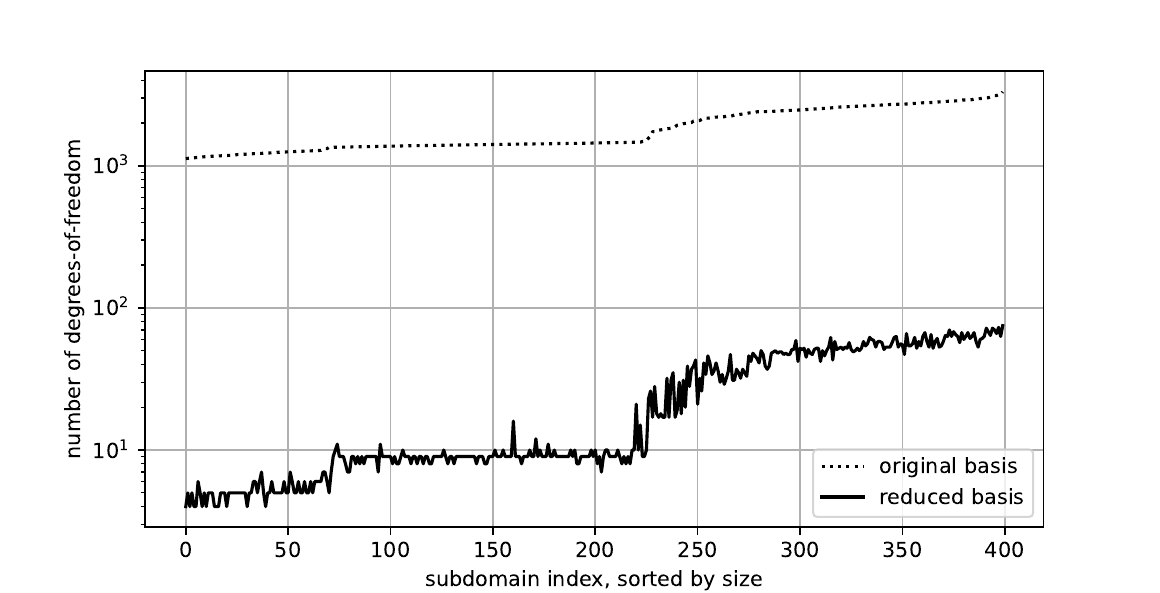}
		\caption[]%
		{{\small The magnitude of the basis reduction for each subdomain and $\varepsilon=\expnumber{1}{-2}$ on a logarithmic scale.}}
	\end{subfigure}
	\caption[ Case 7 error ]
	{\small The results of the engineering geometry example.  The blades of the propeller are "2.5-dimensional" with a small constant number of elements ($\sim 10$) in the thickness direction of the blades.}
	\label{fig:propeller}
\end{figure}

\clearpage

\appendix

\section{Proof to \cref{lem:boundarytol2norm}}

\label{proof:boundarytol2norm}

\begin{proof}
	The squared trace norm can be written as 
	\begin{align*}
		\norm[\partial V_i^+]{z}^2 & = \inf_{\substack{v \in V_{i}^+,\\ v|_{\partial \omega_{i}^+ \setminus \partial \Omega} = z}} \| v\|_{1,\omega_{i}^+}^2 \\
								   & = \inf_{\substack{v \in V_{i}^+,\\ v|_{\partial \omega_{i}^+ \setminus \partial \Omega} = z}} \norm[0,\omega_i^+]{v}^2 + \norm[0, \omega_i^+]{\nabla v}^2 \\
								   & = \min_{\substack{\bm\beta\in\R^{N_i+M_i},\\ \bm\beta_B = \bm\beta_z}} \bm\beta^T \bm B_i^+ \bm \beta + \bm\beta^T \bm A_i^+ \bm\beta \\
								   & = \inf_{\substack{\bm\beta\in\R^{N_i+M_i},\\ \bm\beta_B = \bm\beta_z}} \vec{\bm\beta_I, \bm\beta_z}^T \mat{\bm A_{II}, \bm A_{IB}; \bm A_{BI}, \bm A_{BB}}\vec{\bm\beta_I, \bm\beta_z},
	\end{align*}
where $\bm\beta$ is the coefficient vector of $v$ and its boundary indices are constrained to $\bm\beta_z$. Because $\bm B_i^+$ is symmetric PD and $\bm A_i^+$ symmetric PSD, $\bm A$ is symmetric PD. The optimization problem can then be solved uniquely by differentiating with respect to $\bm\beta_I$ and setting the derivative to zero:
	\begin{align*}
		\nabla_{\bm\beta_I} (\bm\beta^T \bm A\bm\beta) & = \nabla_{\bm\beta_I} (\bm\beta_I^T \bm A_{II}\bm\beta_I + 2\bm\beta_I^T \bm A_{IB}\bm\beta_z + \bm\beta_z^T \bm A_{BB}\bm\beta_z)\\
				& = 2\bm A_{II}\bm\beta_I + 2\bm A_{IB}\bm\beta_z \\
													   & = 0 \\
		\implies \bm\beta_I & = -\bm A_{II}^{-1}\bm A_{IB}\bm\beta_z.
	\end{align*}
	Substituting and using \cref{lem:blockchol} we get
	\begin{align*}
		\norm[\partial V_i^+]{z}^2 & = \min_{\bm\beta_I\in\R^{N_i}}\bm\beta_I^T \bm A_{II}\bm\beta_I + 2\bm\beta_I^T \bm A_{IB}\bm\beta_z + \bm\beta_z^T \bm A_{BB}\bm\beta_z \\
			& = \bm\beta_z^T \bm A_{BI}\bm A_{II}^{-1}\bm A_{IB}\bm\beta_z -2\bm\beta_z^T \bm A_{BI}\bm A_{II}^{-1}\bm A_{IB}\bm\beta_z + \bm\beta_z^T \bm A_{BB}\bm\beta_z \\
			& = \bm\beta_z^T (\bm A_{BB} - \bm A_{BI}\bm A_{II}^{-1}\bm A_{IB})\bm\beta_z \\
			& = \bm\beta_z^T \bm R_{BB}^T \bm R_{BB}\bm\beta_z \\
			& = \norm[\ell_2]{\bm R_{BB}\bm\beta_z}^2,
	\end{align*}
	and taking square roots finishes the proof.
\end{proof}

\section{Probabilistic error bound for the low-rank approximation}

\label{error:lowrank}

Per \cite[Theorem 10.6]{halko_finding_2011}, the expected error of the randomized low-rank approximation is 
\begin{lemma}
	\label{lem:experror}
	Let $\bm A\in R^{m\times n}$ with singular values $\sigma_1\geq \sigma_2\geq \cdots$. Choose a target rank $k\geq 2$ and an oversampling parameter $p\geq 2$, where $k+p\leq \min\Set{m,n}$. Draw a standard Gaussian matrix $\bm \Omega \in \R^{n \times (k+p)}$ and construct the sample matrix $\bm Y = \bm A\bm \Omega$. Then the expected approximation error is 
	$$\mathbb{E}\norm[\ell_2]{(\bm I - \bm P_{\bm Y})\bm A} \leq \left(1+\sqrt{\frac{k}{p-1}}\right)\sigma_{k+1} + \frac{e\sqrt{k+p}}{p}\left(\Sigma_{j>k} \sigma_j^2\right)^{1/2}.$$
\end{lemma}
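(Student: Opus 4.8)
The statement is precisely \cite[Theorem 10.6]{halko_finding_2011}, so the cleanest route is to invoke it directly; for completeness I would reconstruct its two-part argument, first reducing to a deterministic matrix inequality and then handling the Gaussian randomness. Write the SVD $\bm A = \bm U \bm\Sigma \bm V^T$ and split the spectrum at the target rank, $\bm\Sigma = \mathrm{diag}(\bm\Sigma_1,\bm\Sigma_2)$ with $\bm\Sigma_1$ collecting $\sigma_1,\dots,\sigma_k$, and correspondingly $\bm V = [\bm V_1 \; \bm V_2]$. Setting $\bm\Omega_1 = \bm V_1^T \bm\Omega$ and $\bm\Omega_2 = \bm V_2^T \bm\Omega$, the deterministic bound (the analogue of \cite[Theorem 9.1]{halko_finding_2011}) gives, whenever $\bm\Omega_1$ has full row rank, $\norm{(\bm I - \bm P_{\bm Y})\bm A}^2 \leq \norm{\bm\Sigma_2}^2 + \norm{\bm\Sigma_2 \bm\Omega_2 \bm\Omega_1^\dagger}^2$. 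Since $\sqrt{a^2+b^2}\leq a+b$ for $a,b\geq 0$ and $\norm{\bm\Sigma_2} = \sigma_{k+1}$, this already yields $\norm{(\bm I - \bm P_{\bm Y})\bm A} \leq \sigma_{k+1} + \norm{\bm\Sigma_2 \bm\Omega_2 \bm\Omega_1^\dagger}$, so the whole task reduces to bounding the expectation of the second term.

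The next step exploits that $\bm\Omega$ is standard Gaussian and $\bm V$ is orthogonal, so $\bm\Omega_1$ and $\bm\Omega_2$ are independent standard Gaussian matrices of sizes $k\times(k+p)$ and $(n-k)\times(k+p)$. I would condition on $\bm\Omega_1$ and take the expectation over $\bm\Omega_2$ first, using the Gaussian fact (\cite[Proposition 10.1]{halko_finding_2011}) that for fixed $\bm S,\bm T$ one has $\mathbb{E}\norm{\bm S \bm G \bm T} \leq \norm{\bm S}\,\norm[F]{\bm T} + \norm[F]{\bm S}\,\norm{\bm T}$. With $\bm S = \bm\Sigma_2$ and $\bm T = \bm\Omega_1^\dagger$, and then taking a further expectation over $\bm\Omega_1$, this gives $\mathbb{E}\norm{\bm\Sigma_2 \bm\Omega_2 \bm\Omega_1^\dagger} \leq \norm{\bm\Sigma_2}\,\mathbb{E}\norm[F]{\bm\Omega_1^\dagger} + \norm[F]{\bm\Sigma_2}\,\mathbb{E}\norm{\bm\Omega_1^\dagger}$. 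Finally I would insert the two moment estimates for the pseudoinverse of a short-fat Gaussian matrix, namely $\mathbb{E}\norm[F]{\bm\Omega_1^\dagger} \leq (\mathbb{E}\norm[F]{\bm\Omega_1^\dagger}^2)^{1/2} = \sqrt{k/(p-1)}$ by Jensen, and $\mathbb{E}\norm{\bm\Omega_1^\dagger} \leq e\sqrt{k+p}/p$, together with $\norm{\bm\Sigma_2} = \sigma_{k+1}$ and $\norm[F]{\bm\Sigma_2} = (\sum_{j>k}\sigma_j^2)^{1/2}$; adding back $\sigma_{k+1}$ reproduces exactly the stated bound.

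The main obstacle is not the algebra but the two Gaussian moment inputs, especially the spectral-norm estimate $\mathbb{E}\norm{\bm\Omega_1^\dagger} \leq e\sqrt{k+p}/p$: it rests on control of the smallest singular value of a Gaussian matrix and is where the oversampling parameter $p\geq 2$ becomes essential, both to keep $p-1>0$ and to guarantee integrable inverse moments. The Frobenius identity $\mathbb{E}\norm[F]{\bm\Omega_1^\dagger}^2 = k/(p-1)$ is comparatively elementary, following from the mean of an inverse-Wishart distribution, whereas the spectral bound requires the sharper Gaussian large-deviation machinery of \cite{halko_finding_2011}. Since all these ingredients are established there, it suffices in the present context to cite \cite[Theorem 10.6]{halko_finding_2011} directly and observe that the sample matrix $\bm Y = \bm A\bm\Omega$ with $k+p$ columns satisfies the stated hypotheses.
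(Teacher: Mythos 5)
Your proposal is correct and takes the same route as the paper: the paper offers no proof of its own, stating the lemma verbatim as \cite[Theorem 10.6]{halko_finding_2011}, which is exactly the citation you lead with. Your reconstruction of the underlying argument (deterministic bound, splitting $\bm\Omega$ via the SVD, the Gaussian expectation inequality, and the pseudoinverse moment bounds) is a faithful and accurate sketch of the proof in that reference, but it is supplementary to what the paper itself requires.
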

We also include the tail error result \cite[Theorem 10.8]{halko_finding_2011}. 
\begin{lemma}
	\label{lem:tailerror}
	Consider the assumptions of \cref{lem:experror}. Assume further that $p\geq 4$. For all $u,t\geq 1$, $$\norm[\ell_2]{(\bm I - \bm P_{\bm Y})\bm A}\leq \left(1+t\cdot \sqrt{\frac{3k}{p+1}}\right)\sigma_{k+1} + t\cdot \frac{e\sqrt{k+p}}{p+1}(\Sigma_{j>k}\sigma_j^2)^{1/2} + ut\cdot \frac{e\sqrt{k+p}}{p+1}\sigma_{k+1},$$ with failure probability at most $2t^{-p} + e^{-u^2/2}$.
\end{lemma}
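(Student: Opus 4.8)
The statement is Theorem 10.8 of \cite{halko_finding_2011}, so the plan is to follow that argument, which marries a deterministic structural bound to Gaussian concentration of measure. First I would fix a singular value decomposition $\bm A = \bm U \bm \Sigma \bm V^T$ and split it at the target rank, writing $\bm \Sigma = \mathrm{diag}(\bm \Sigma_1, \bm \Sigma_2)$ where $\bm \Sigma_1$ holds the $k$ largest singular values and $\bm V = [\bm V_1 \; \bm V_2]$ is partitioned conformally. Setting $\bm \Omega_1 = \bm V_1^T \bm \Omega$ and $\bm \Omega_2 = \bm V_2^T \bm \Omega$, the cornerstone is the deterministic bound
\[
\norm[\ell_2]{(\bm I - \bm P_{\bm Y})\bm A}^2 \leq \norm[\ell_2]{\bm \Sigma_2}^2 + \norm[\ell_2]{\bm \Sigma_2 \bm \Omega_2 \bm \Omega_1^{\dagger}}^2,
\]
valid whenever $\bm \Omega_1$ has full row rank, which happens almost surely. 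This isolates the only random quantity that must be controlled, namely $\norm[\ell_2]{\bm \Sigma_2 \bm \Omega_2 \bm \Omega_1^{\dagger}}$.

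Next I would exploit rotational invariance of the Gaussian law: since the rows of $\bm V^T$ are orthonormal, $\bm \Omega_1 \in \R^{k \times (k+p)}$ and $\bm \Omega_2 \in \R^{(n-k)\times(k+p)}$ are independent standard Gaussian matrices. Conditioning on $\bm \Omega_1$, the map $\bm \Omega_2 \mapsto \norm[\ell_2]{\bm \Sigma_2 \bm \Omega_2 \bm \Omega_1^{\dagger}}$ is Lipschitz in the Frobenius norm with constant $\norm[\ell_2]{\bm \Sigma_2}\,\norm[\ell_2]{\bm \Omega_1^{\dagger}}$, and its conditional mean is at most $\norm[\ell_2]{\bm \Sigma_2}\,\norm[F]{\bm \Omega_1^{\dagger}} + \norm[F]{\bm \Sigma_2}\,\norm[\ell_2]{\bm \Omega_1^{\dagger}}$ by the standard mean estimate for Gaussian matrices. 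Gaussian Lipschitz concentration then gives, for each $u \geq 1$, a deviation of at most $u\,\norm[\ell_2]{\bm \Sigma_2}\,\norm[\ell_2]{\bm \Omega_1^{\dagger}}$ above this mean except on an event of probability $e^{-u^2/2}$; this is what produces the $ut$ term multiplying $\sigma_{k+1}$ and the $e^{-u^2/2}$ part of the failure probability.

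The remaining and most delicate step is to strip the conditioning by controlling the pseudoinverse norms $\norm[\ell_2]{\bm \Omega_1^{\dagger}}$ and $\norm[F]{\bm \Omega_1^{\dagger}}$, i.e.~the reciprocal of the smallest singular value of a $k \times (k+p)$ Gaussian matrix. The hard part will be these polynomial tail estimates, which rest on explicit density bounds for the extreme singular values of a rectangular Gaussian matrix; they yield $\norm[F]{\bm \Omega_1^{\dagger}} \leq \sqrt{3k/(p+1)}\,t$ and $\norm[\ell_2]{\bm \Omega_1^{\dagger}} \leq e\sqrt{k+p}/(p+1)\,t$ with failure probabilities $t^{-p}$ and $t^{-(p+1)}$ respectively, and the hypothesis $p \geq 4$ is exactly what makes the underlying integrals converge. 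Finally I would substitute $\norm[\ell_2]{\bm \Sigma_2} = \sigma_{k+1}$ and $\norm[F]{\bm \Sigma_2} = (\sum_{j>k}\sigma_j^2)^{1/2}$, combine the three contributions arising from the deterministic $\norm[\ell_2]{\bm \Sigma_2}$ term, the conditional mean, and the deviation, and take a union bound over the two pseudoinverse events (total probability $\leq 2t^{-p}$, since $t^{-(p+1)} \leq t^{-p}$ for $t \geq 1$) together with the concentration event, arriving at the stated three-term bound with failure probability $2t^{-p} + e^{-u^2/2}$.
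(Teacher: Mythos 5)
Your proposal is correct: the paper offers no proof of this lemma beyond citing it as Theorem 10.8 of Halko--Martinsson--Tropp, and your argument is a faithful reconstruction of that reference's own proof (deterministic bound $\norm[\ell_2]{(\bm I - \bm P_{\bm Y})\bm A}^2 \leq \norm[\ell_2]{\bm \Sigma_2}^2 + \norm[\ell_2]{\bm \Sigma_2 \bm \Omega_2 \bm \Omega_1^{\dagger}}^2$, independence of $\bm \Omega_1$ and $\bm \Omega_2$ by rotational invariance, Gaussian Lipschitz concentration conditional on $\bm \Omega_1$, and the polynomial tail bounds for $\norm[F]{\bm \Omega_1^{\dagger}}$ and $\norm[\ell_2]{\bm \Omega_1^{\dagger}}$ combined via a union bound). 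The decomposition, the three error terms, and the failure probability $2t^{-p} + e^{-u^2/2}$ all match the cited source, so there is nothing to add.
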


For our example at the end of \cref{sec:scalingtests}, the $6$-hop case of \cref{fig:spectrums} (A) with the sketching parameter $k_i = \lfloor M_i/8 \rfloor$, the minimum rank is $k\approx 40$ and oversampling parameter $p\approx 175-k = 135$. The expected error bound for sketching would then be approximately $$\frac{3}{2}\sigma_{k+1} + \frac{1}{4}(\Sigma_{j>k}\sigma_j^2)^{1/2}$$ and the tail error bound with $t=2, u=5$ approximately $$\frac{11}{2}\sigma_{k+1} + \frac{1}{2}(\Sigma_{j>k}\sigma_j^2)^{1/2}$$ with failure probability roughly $\expnumber{4}{-6}$. 

Again, we iterate that while these are the error bounds for the low-rank approximation, the local error bound derived from the low-rank approximation in \cref{lem:localbound} is very loose. Hence, the actual reduction errors present in \cref{sec:numtests} are magnitudes smaller.

\section{Spectra of weighted $\bm Z_i$}
\label{app:spectra}

\Cref{fig:spectrums} presents singular values of weighted lifting operators $\bm Z_i$
corresponding to two differently sized subdomains and different extensions 
plotted with a logarithmic $y$-axis. Recall that the lifting operator is
defined via \cref{prob:aux2} and is load-independent. The decay is first
exponential, then closer to polynomial, and then again exponential. Noticeably,
increased extensions escalate the decay rate, but the singular values go
ultimately to machine epsilon at the same number of singular vectors regardless
of the extension size. This number is very close to the degrees-of-freedom of
the original interface of the subdomain. Intuitively, all solution information
of a zero load problem in the subdomain is encoded in the interface
degrees-of-freedom, and the weighted $\bm Z_i$ closely represents this
phenomenon.
The original subdomain size does not seem to really
affect the rate of decay given the same relative extension. Smaller extensions
would result in easier local problems at a cost of slightly more difficult
global problems as more boundary dimensions must be included in the
approximation. For extensions that roughly double the subdomain diameter, like
$6$-hop in \cref{fig:spectrums}, using $k_i = \lfloor M_i/8 \rfloor$ as the sketching
parameter seems suitable for yielding optimal bases with tolerance
$\epsilon=\expnumber{1}{-2}$ with a very high probability.

\begin{figure}[H]
	\centering
	\begin{subfigure}[b]{0.7\textwidth}
		\centering
		\includegraphics[width=\textwidth]{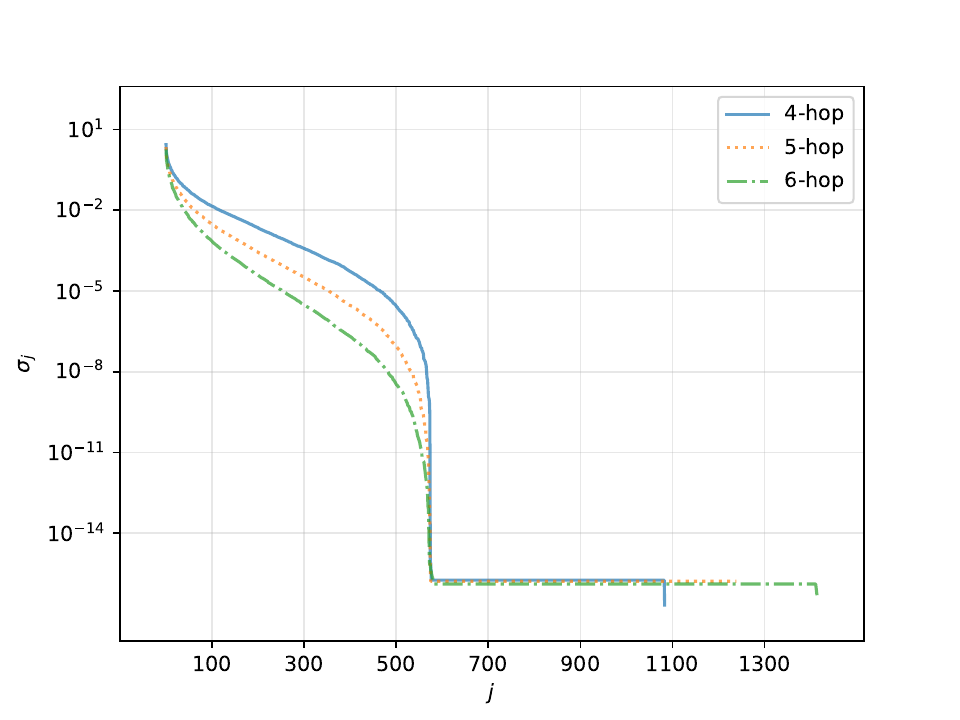}
		\caption[]%
		{{\small $\#\omega=\numprint{2519}$ with extensions containing \numprint{6170}--\numprint{9283} DOFs }}
	\end{subfigure}
	\hfill
	\begin{subfigure}[b]{0.7\textwidth}  
		\centering 
		\includegraphics[width=\textwidth]{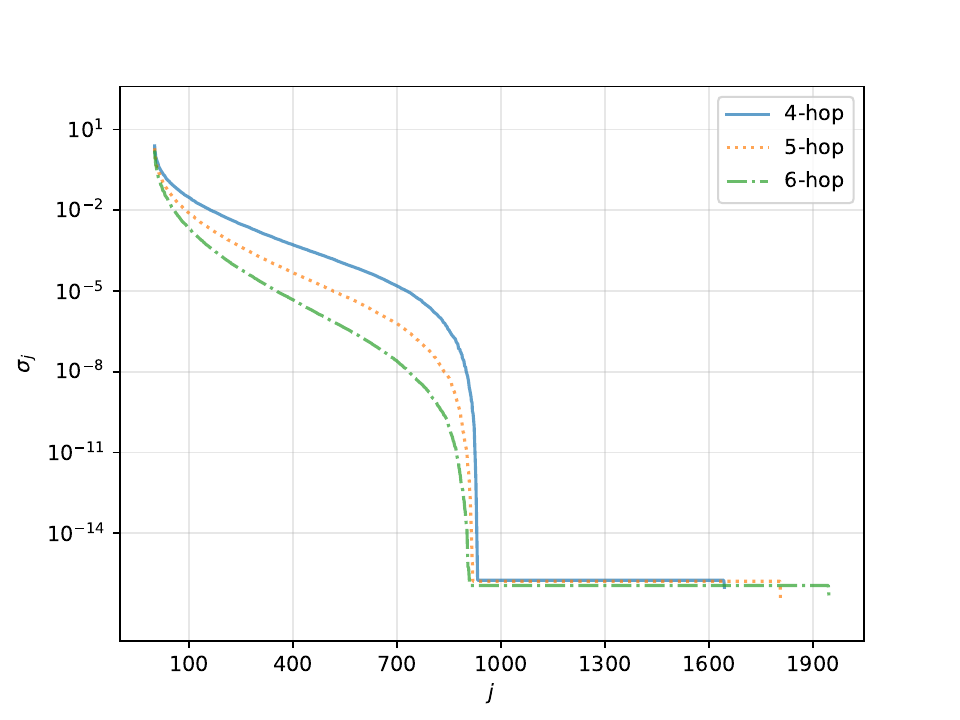}
		\caption[]%
		{{\small $\#\omega=\numprint{4895}$ with extensions containing \numprint{11875}--\numprint{17148} DOFs }}
	\end{subfigure}
	\caption[ Z operator singular values ]
	{\small The sorted singular values of weighted $\bm Z_i$ for two subdomains with $4$, $5$ and $6$-hop extensions plotted on a logarithmic $y$-axis.}
	\label{fig:spectrums}
\end{figure}


\bibliography{references}
\bibliographystyle{plain}

\end{document}